\documentclass[12pt]{amsart}
\usepackage[english]{babel}
\usepackage{amsmath,amssymb,amsthm,latexsym,amsfonts}
\usepackage{tikz}
\pagestyle{plain}
\sloppy

\parindent=0.4cm
\textheight=230mm \textwidth=165mm \headsep=0.0in \headheight=-5mm
\topmargin=0mm \oddsidemargin=5mm \evensidemargin=-5mm
\theoremstyle{plain}
\newtheorem{lemma}{Lemma}
\newtheorem{theorem}[lemma]{Theorem}

\newtheorem{proposition}[lemma]{Proposition}
\newtheorem{definition}{Definition}
\newtheorem{conjecture}{Conjecture}
\newtheorem{remark}{Remark}

\begin{document}

\title{G-degree for singular manifolds}

\author{Maria Rita Casali, Paola Cristofori, Luigi Grasselli}

\address{M.R. Casali - P. Cristofori - Department of Physics, Informatics and Mathematics, University of Modena and Reggio Emilia (Italy)}
            \email{casali@unimore.it\quad paola.cristofori@unimore.it}
          
\address{L. Grasselli - Department of Sciences and Methods for Engineering, University of Modena and Reggio Emilia (Italy)}
           \email{luigi.grasselli@unimore.it}

\maketitle

\centerline{\textit{To Professor Maria Teresa Lozano on the occasion of her 70th birthday}}

\begin{abstract}
The G-degree of colored graphs is a key concept in the approach to Quantum Gravity via tensor models.
The present paper studies the  properties of the G-degree for the large class of graphs representing singular manifolds (including closed PL manifolds). \\
In particular, the complete topological  classification up to G-degree $6$ is obtained in dimension $3$, where {\it all} $4$-colored graphs represent singular manifolds.

\bigskip
\medskip

\noindent {\it 2010 Mathematics Subject Classification:} 57Q15, 57N10, 57M15
\smallskip

\noindent {\it Key words and phrases:} colored graphs, singular manifolds, Gurau degree, tensor models
\end{abstract}

\section{Introduction} \label{intro}

In the last decade, the strong interaction between random tensor models, within Quantum Gravity theories, and the topology of edge-colored graphs has been deeply investigated. In fact, regular edge-colored graphs represent pseudomanifolds (see Section \ref{colored graphs}) and their {\it G-degree} plays an important r\^ole in the study of tensor invariants (see Section \ref{physics}).

The aim of this paper is to describe topological properties of the G-degree for the wide class of {\it singular manifolds} (including all closed PL manifolds), which can be  of interest as regards the so-called {\it 1/N expansion} within tensor models theory.

In this direction a key result, proved in Section \ref{general properties}, states the finite-to-one property for the G-degree of singular manifolds in any dimension. Thus, only a finite number of singular manifolds appears in each term of the 1/N expansion.

Section \ref{3-dim} is devoted to the $3$-dimensional case, where all regular $4$-colored graphs represent singular $3$-manifolds.
Hence, the complete classification for all orientable singular $3$-manifolds up to G-degree $6$ (Theorem \ref{classif 3-dim}) allows to know the topology of the - most significant - first seven elements in the 1/N expansion.

\section{Colored graphs and singular manifolds} \label{colored graphs}

A {\it singular (PL) $d$-manifold} ($d>1$) is a compact connected $d$-dimensional polyhedron admitting a simplicial triangulation where the links of vertices are closed connected $(d-1)$-manifolds,
while the links of all $h$-simplices with $h > 0$ are PL $(d-h-1)$-spheres.
Vertices whose links are not PL $(d-1)$-spheres are called {\it singular}.

\begin{remark}\label{correspondence-sing-boundary} {\em If $N$ is a singular $d$-manifold, then by deleting small open neighbourhoods of its singular vertices, a compact PL $d$-manifold $\check N$ is obtained.
Obviously $N=\check N$ iff $N$ is a closed manifold, otherwise $\check N$ has a non-empty boundary without spherical components.
Conversely, given a compact PL $d$-manifold $M$, a singular $d$-manifold $\widehat M$ can be constructed by capping off each component of $\partial M$ by a cone over it.

Note that, by restricting ourselves to the class of compact PL $d$-manifolds with no spherical boundary components,  the above correspondence is bijective and so singular $d$-manifolds and compact $d$-manifolds of this class can be associated to each other in a well-defined way.
As a consequence, although most of the following definitions and results will be formulated in terms of singular manifolds, they could be easily translated in the context of compact PL manifolds (with no spherical boundary components).}
\end{remark}

\begin{definition} \label{$d+1$-colored graph}
{\em A $(d+1)$-colored graph ($d \ge 1$) is a pair $(\Gamma,\gamma)$, where $\Gamma=(\mathcal{V}(\Gamma), \mathcal{E}(\Gamma))$ is a regular $d+1$ valent multigraph (i.e. loops are forbidden, but multiple edges are allowed)  and $\gamma$ is a {\it coloration} that is a map  $\gamma: \mathcal{E}(\Gamma) \rightarrow \Delta_d=\{0,\ldots, d\}$ which is injective on adjacent edges.}
\end{definition}

For every $\mathcal B\subseteq\Delta_d$ let $\Gamma_{\mathcal B}$ be the subgraph obtained from $(\Gamma, \gamma)$ by deleting all the edges that are not colored by the elements of $\mathcal B$. The connected components of $\Gamma_{\mathcal B}$ are called {\it ${\mathcal B}$-residues} or, if $\#\mathcal B = h$, {\it $h$-residues} of $\Gamma$ and we denote their number by $g_{\mathcal B}.$ In the following, if $\mathcal B =\{c_1,\ldots,c_h\}$, its complementary set in $\Delta_d$ will be denoted by $\hat c_1\ldots\hat c_h.$

\noindent A $d$-dimensional pseudocomplex\footnote{A $d$-pseudocomplex is a a collection of $d$-simplices whose $(d-1)$-faces are identified in pairs 
by affine homeomorphisms; actually, it generalizes the concept of simplicial complex since two $d$-simplices may intersect in a union of $(d-1)$-faces, although self-intersections are forbidden.} 
$K(\Gamma)$ associated to a $(d+1)$-colored graph $(\Gamma, \gamma)$ can be constructed in the following way:
\begin{itemize}
\item for each vertex of $\Gamma$ let us consider a $d$-simplex and label its vertices by the elements of $\Delta_d$;
\item for each pair of $c$-adjacent vertices of $\Gamma$ ($c\in\Delta_d$), the corresponding $d$-simplices are glued along their $(d-1)$-dimensional faces opposite to the $c$-labeled vertices, the gluing being determined by the identification of equally labeled vertices.
\end{itemize}

In general $K(\Gamma)$ is a {\it $d$-pseudomanifold}, that is a pure, non-branching and strongly connected pseudocomplex (\cite{Seifert-Threlfall}); $(\Gamma, \gamma)$ is said to {\it represent} $|K(\Gamma)|$, which, with a slight abuse of language, will be called a $d$-pseudomanifold, too.

Note that, as a consequence of its construction, $K(\Gamma)$ is endowed with a vertex-labeling by $\Delta_d$ that is injective on any simplex. 
Moreover, there exists a bijective correspondence between the $h$-residues of $\Gamma$ colored by any $\mathcal B\subseteq\Delta_d$ and the $(d-h)$-simplices of $K(\Gamma)$ whose vertices are labeled by $\Delta_d - \mathcal B$.

In particular, for any color $c\in\Delta_d$ each connected component of $\Gamma_{\hat c}$ is a $d$-colored graph representing  a pseudocomplex that is PL-homeomorphic to the link of a $c$-labeled vertex of $K(\Gamma)$ in its first barycentric subdivision.
Therefore, $|K(\Gamma)|$ is a singular $d$-manifold (resp. a closed $d$-manifold) iff, for each color $c\in\Delta_d$, all $\hat c$-residues of $\Gamma$ represent closed $(d-1)$-manifolds (resp. a $(d-1)$-sphere).

In virtue of the bijection described in Remark \ref{correspondence-sing-boundary}, a $(d+1)$-colored  graph $(\Gamma,\gamma)$ is said to {\it represent}
a compact PL $d$-manifold $M$ with no spherical boundary components if and only if  it represents the associated singular manifold $\widehat M$.

The following theorem extends to singular manifolds a well-known result - due to \cite{Pezzana}  - founding the combinatorial representation theory for closed PL-manifolds of arbitrary dimension via colored graphs (the so-called {\it crystallization theory}).

\begin{theorem} \label{Theorem_gem}
Any singular $d$-manifold  
admits a $(d+1)$-colored graph representing it.
\end{theorem}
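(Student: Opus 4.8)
The plan is to prove this by starting from an arbitrary simplicial triangulation of the singular $d$-manifold and converting it into a colored graph. Since any singular $d$-manifold $N$ admits, by definition, a simplicial triangulation $K$ whose vertex links satisfy the required manifold/sphere conditions, the first step is to replace $K$ by its barycentric subdivision $K'$ (or iterate as needed). The key property of the barycentric subdivision is that its vertices carry a canonical labeling: each vertex of $K'$ corresponds to a simplex of $K$, and one labels it by the dimension of that simplex, giving values in $\Delta_d = \{0,\ldots,d\}$. On any $d$-simplex of $K'$ this labeling is injective, because a maximal chain of faces $\sigma_0 \subset \sigma_1 \subset \cdots \subset \sigma_d$ in $K$ has strictly increasing dimensions. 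This produces a \emph{balanced} (or \emph{properly vertex-labeled}) pseudocomplex, which is exactly the structure that the construction relating $K(\Gamma)$ to graphs requires.

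The second step is to read off the colored graph $\Gamma$ from this labeled triangulation by reversing the $K(\Gamma)$ construction described just before the theorem. I would place one vertex of $\Gamma$ at the barycenter of each $d$-simplex of $K'$, and join two vertices by an edge colored $c$ precisely when the corresponding $d$-simplices share the $(d-1)$-face opposite to their respective $c$-labeled vertices. Because $K'$ is a pseudocomplex in which each $(d-1)$-face is shared by at most two $d$-simplices, and because the manifold/singular-manifold hypotheses guarantee that \emph{every} interior $(d-1)$-face is shared by exactly two $d$-simplices, each vertex of $\Gamma$ is incident to exactly one edge of each color; hence $\Gamma$ is regular of valence $d+1$, the coloring is injective on adjacent edges, and there are no loops. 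Thus $(\Gamma,\gamma)$ is a genuine $(d+1)$-colored graph, and by construction $|K(\Gamma)| = |K'| = N$.

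The final step is to verify that the resulting $|K(\Gamma)|$ really is the singular manifold $N$ in the precise sense of the representation defined above, i.e. to check the link condition: for each color $c$, every $\hat c$-residue of $\Gamma$ represents a closed $(d-1)$-manifold. This follows from the correspondence, already stated in the excerpt, that each connected component of $\Gamma_{\hat c}$ represents a pseudocomplex PL-homeomorphic to the link of a $c$-labeled vertex of $K(\Gamma)$. Since that vertex corresponds to a simplex of the original triangulation $K$, its link is (up to subdivision) the link of a simplex of $K$, which by the definition of singular $d$-manifold is either a closed $(d-1)$-manifold (for the $0$-simplices) or a PL $(d-1)$-sphere (for higher simplices). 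Either way it is a closed $(d-1)$-manifold, which is exactly what the criterion for representing a singular manifold demands.

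I expect the main obstacle to be the bookkeeping in the second step: verifying carefully that the barycentric subdivision of a \emph{pseudocomplex} arising from a singular triangulation still has the non-branching property (each $(d-1)$-simplex in a facet of exactly two $d$-simplices) at \emph{every} face, including those adjacent to singular vertices, so that $\Gamma$ is genuinely $(d+1)$-regular with no vertices of deficient valence. One must confirm that the cones over closed $(d-1)$-manifold links behave well under subdivision and do not create free $(d-1)$-faces; here the hypothesis that links of positive-dimensional simplices are PL spheres is what rules out branching away from the vertices. Once this combinatorial regularity is secured, the rest is a direct application of the dictionary between labeled pseudocomplexes and colored graphs, essentially identical to the classical Pezzana argument in \cite{Pezzana}, now carried out in the singular setting.
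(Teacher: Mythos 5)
Your proposal is correct and follows essentially the same route as the paper's own proof: both pass to the first barycentric subdivision $K'$, label each vertex by the dimension of the simplex of $K$ whose open cell contains it, and take the dual $(d+1)$-valent graph $\Gamma(K')$ with the edge-coloring induced by the vertex labels. Your additional check that the sphere condition on links of positive-dimensional simplices (in particular, that links of $(d-1)$-simplices are $\mathbb S^0$) forces every $(d-1)$-face into exactly two $d$-simplices, so that $\Gamma$ is genuinely $(d+1)$-regular, is exactly the verification the paper dismisses as ``easy to see.''
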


\begin{proof} Let us consider a singular $d$-manifold $N$ and let $K'$ be the first barycentric subdivision of a (pseudo)triangulation $K$ of $N$.
Then each vertex of $K'$ can be labeled by the dimension of the open simplex of $K$ containing it.
As a consequence we obtain a vertex-labeled $d$-pseudocomplex triangulating $N.$  
Let $\Gamma(K')$ be the $1$-complex consisting of the barycenters of the $d$-simplices and of the edges dual to the $(d-1)$-simplices of $K'.$  $\Gamma(K')$ is a regular $(d+1)$-valent graph  that inherits, in an obvious way, a coloration on edges induced by the vertex-labeling of $K'$.
As it is easy to see, $\Gamma(K')$ represents $N$.
\end{proof}


We will denote by $G_{d+1}^{(s)}$ the set of $(d+1)$-colored graphs representing singular $d$-manifolds.
In particular, note that in dimension $3$ the above set coincides with the whole set of $4$-colored graphs.

We will call a $d$-residue of $\Gamma\in G_{d+1}^{(s)}$ {\it ordinary} if it represents $\mathbb S^{d-1}$, {\it singular} otherwise. Similarly, a color $c$ will be called {\it singular} if at least one of the $\hat c$-residues of $\Gamma$ is singular.

The following result establishes the existence of a particular set of embeddings of a bipartite (resp. non-bipartite) $(d+1)$-colored graph into orientable (resp. non-orientable) surfaces.

\begin{proposition}{\em (\cite{Gagliardi 1981})}\label{reg_emb}
Let $(\Gamma,\gamma)$ be a bipartite (resp. non-bipartite) $(d+1)$-colored graph of order $2p$. Then for each cyclic permutation $\varepsilon = (\varepsilon_0,\ldots,\varepsilon_d)$ of $\Delta_d$, up to inverse, there exists a cellular embedding, called \emph{regular}, of $(\Gamma,\gamma)$ into an orientable (resp. non-orientable) closed surface $F_{\varepsilon}(\Gamma)$ whose regions are bounded by the images of the $\{\varepsilon_j,\varepsilon_{j+1}\}$-colored cycles, for each $j \in \mathbb Z_{d+1}$.
Moreover, the genus (resp. half the genus)  $\rho_{\varepsilon} (\Gamma)$ of $F_{\varepsilon}(\Gamma)$ satisfies

\begin{equation*}
2 - 2\rho_\varepsilon(\Gamma)= \sum_{j\in \mathbb{Z}_{d+1}} g_{\varepsilon_j\varepsilon_{j+1}} + (1-d)p.
\end{equation*}

No regular embedding of $(\Gamma,\gamma)$ exists into non-orientable (resp. orientable) surfaces.
\end{proposition}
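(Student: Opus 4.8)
The plan is to realize the embedding through the classical theory of rotation systems (equivalently, ribbon graphs or combinatorial maps), in which a cellular embedding of a graph into a closed surface is encoded by a cyclic ordering of the edge-ends around each vertex, together with an edge-sign when the target surface is allowed to be non-orientable. Since $(\Gamma,\gamma)$ is $(d+1)$-valent with exactly one edge of each color at every vertex, a cyclic permutation $\varepsilon=(\varepsilon_0,\ldots,\varepsilon_d)$ of $\Delta_d$ prescribes, at each vertex, the cyclic order $\varepsilon_0,\varepsilon_1,\ldots,\varepsilon_d$ of the incident edge-ends by color. First I would treat the bipartite case: fixing the bipartition $\mathcal V(\Gamma)=\mathcal V_0\sqcup\mathcal V_1$, I assign the rotation $\varepsilon$ to the vertices of $\mathcal V_0$ and the reversed rotation $\varepsilon^{-1}$ to those of $\mathcal V_1$. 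This yields a sign-free rotation system, which by the Heffter--Edmonds principle determines a unique cellular embedding into an orientable closed surface $F_\varepsilon(\Gamma)$.

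The next, and most delicate, step is to identify the faces of this embedding. Modeling the embedding as a ribbon graph --- a disk at each vertex whose boundary meets the incident ribbons in the prescribed cyclic color order --- I would trace the boundary components. A corner of a $\mathcal V_0$-disk lies between two edge-ends whose colors are consecutive in $\varepsilon$, say $\varepsilon_j$ and $\varepsilon_{j+1}$; following the $\varepsilon_{j+1}$-ribbon to its other end, the reversal of the rotation at the $\mathcal V_1$-vertex reached forces the boundary to pair $\varepsilon_{j+1}$ again with $\varepsilon_j$ rather than with $\varepsilon_{j+2}$. Hence each boundary component stays within a single pair of consecutive colors and runs exactly once around a $\{\varepsilon_j,\varepsilon_{j+1}\}$-colored cycle; conversely each such cycle bounds one face. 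This establishes the bijection between faces and the $\{\varepsilon_j,\varepsilon_{j+1}\}$-residues, so that the number of faces equals $\sum_{j\in\mathbb Z_{d+1}} g_{\varepsilon_j\varepsilon_{j+1}}$.

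The genus formula then follows from Euler's relation. Counting $V=2p$ vertices, $E=(d+1)p$ edges, and $F=\sum_{j\in\mathbb Z_{d+1}} g_{\varepsilon_j\varepsilon_{j+1}}$ faces, I obtain $\chi(F_\varepsilon(\Gamma))=V-E+F=(1-d)p+\sum_{j\in\mathbb Z_{d+1}} g_{\varepsilon_j\varepsilon_{j+1}}$; substituting $\chi=2-2\rho_\varepsilon(\Gamma)$ (with $\rho_\varepsilon$ the genus in the orientable case and half the genus in the non-orientable case) gives the stated equality. The independence of the construction from the choice of $\varepsilon$ up to inverse is immediate, since passing to $\varepsilon^{-1}$ only reverses the global orientation and leaves the set of consecutive color pairs $\{\varepsilon_j,\varepsilon_{j+1}\}$, hence the face structure, unchanged.

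It remains to handle the non-bipartite case and the non-existence claims, which I expect to be the main obstacle since it rests on the orientability dichotomy rather than on a computation. The point is that the prescribed face structure forces the local rotation at each vertex to be either $\varepsilon$ or $\varepsilon^{-1}$ (the neighbors of each color $\varepsilon_{j+1}$ must be $\varepsilon_j$ and $\varepsilon_{j+2}$), and a globally coherent orientation of $F_\varepsilon(\Gamma)$ exists precisely when these choices can be made to alternate across every edge --- that is, precisely when $\mathcal V(\Gamma)$ admits a proper $2$-coloring. Thus when $\Gamma$ is bipartite the surface is orientable and admits no regular embedding in a non-orientable surface, while when $\Gamma$ is non-bipartite no consistent alternation exists: one realizes the same family of bicolored faces by inserting a twist along the edges violating the coloring, producing a non-orientable $F_\varepsilon(\Gamma)$ and excluding any orientable regular embedding. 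Since the face count is unaffected by the twists, the Euler computation above applies verbatim, and the formula holds with $\rho_\varepsilon(\Gamma)$ now interpreted as half the genus.
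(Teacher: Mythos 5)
Your proof is correct, and it is essentially the classical argument: the paper itself gives no proof of Proposition \ref{reg_emb} (it is quoted from \cite{Gagliardi 1981}), and that source proceeds exactly as you do, via rotation systems with rotation $\varepsilon$ on one bipartition class and $\varepsilon^{-1}$ on the other, Heffter--Edmonds, face-tracing to identify regions with the $\{\varepsilon_j,\varepsilon_{j+1}\}$-colored cycles, and the Euler count $2p-(d+1)p+\sum_j g_{\varepsilon_j\varepsilon_{j+1}}$. Your handling of the non-bipartite case and the non-existence claims is also sound, since the bicolored face structure forces the local rotations to be $\pm\varepsilon$ and the edge twists to be determined by the class assignment, so the twist parity along any cycle equals the cycle's length parity and orientability of $F_\varepsilon(\Gamma)$ is equivalent to bipartiteness of $\Gamma$.
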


The \emph{Gurau degree} (often called {\it degree} in the tensor models literature) and the {\it regular genus} of a colored graph are defined in terms of the embeddings of Proposition \ref{reg_emb}.

\begin{definition} \label{Gurau-degree}
{\em Let $(\Gamma,\gamma)$ be a $(d+1)$-colored graph.
If $\{\varepsilon^{(1)}, \varepsilon^{(2)}, \dots , \varepsilon^{(\frac {d!} 2)}\}$ is the set of all cyclic permutations of $\Delta_d$ (up to inverse), $ \rho_{\varepsilon^{(i)}}(\Gamma)$ ($i=1, \dots \frac {d!} 2$) is called the {\it regular genus of $\Gamma$ with respect to the permutation $\varepsilon^{(i)}$}. Then, the \emph{Gurau degree} (or \emph{G-degree} for short) of $\Gamma$, denoted by  $\omega_{G}(\Gamma)$, is defined as
\begin{equation*}
 \omega_{G}(\Gamma) \ = \ \sum_{i=1}^{\frac {d!} 2} \rho_{\varepsilon^{(i)}}(\Gamma)
\end{equation*}
and the {\it regular genus} of $\Gamma$, denoted by $\rho(\Gamma)$, is defined as
\begin{equation*}
 \rho(\Gamma) \ = \ \min\, \Big\{\rho_{\varepsilon^{(i)}}(\Gamma)\ /\ i=1,\ldots,\frac {d!} 2\Big\}.
\end{equation*}}
\end{definition}

Note that, in dimension $2$, any bipartite (resp. non-bipartite) $3$-colored graph $(\Gamma,\gamma)$ represents an orientable (resp. non-orientable) surface $|K(\Gamma)|$ and $\rho(\Gamma)= \omega_G(\Gamma)$ is exactly the genus (resp. half the genus) of $|K(\Gamma)|.$
On the other hand, for $d\geq 3$, the G-degree of any $(d+1)$-colored graph (resp. the regular genus of any $(d+1)$-colored graph representing a closed PL $d$-manifold) is proved to be a non-negative {\it integer}, also in the non-bipartite case:  see \cite[Proposition 7]{Casali-Cristofori-Dartois-Grasselli} (resp. \cite[Proposition A]{Chiavacci-Pareschi}).

As a consequence of the definition of regular genus for colored graphs and of Theorem \ref{Theorem_gem}, two PL invariants for singular $d$-manifolds can be defined:

\begin{definition}\label{def_gen_degree} {\em Let $N$ be a singular $d$-manifold ($d\geq 2$).
The {\it generalized regular genus} of $N$ is defined as
\begin{equation*}
\overline{\mathcal G}(N)=\min \{\rho(\Gamma)\ | \ (\Gamma,\gamma)\mbox{ represents} \ N\}.
\end{equation*}
and the {\it Gurau degree} (or {\it G-degree}) of $N$ is defined as
\begin{equation*}
\mathcal D_G(N)=\min \{\omega_G(\Gamma)\ | \ (\Gamma,\gamma)\mbox{ represents} \ N\}.
\end{equation*}}
\end{definition}

For any $(d+1)$-colored graph $\Gamma$, \ $ \mathcal \omega_G(\Gamma) \ \ge \ \frac{d!}2 \cdot \rho(\Gamma)$ obviously holds.
Hence, for any singular $d$-manifold $N$:
$$ \mathcal D_G(N) \ \ge \ \frac{d!}2 \cdot \overline{\mathcal G}(N).$$

\begin{remark} \label{generalized vs regular genus}
{\em Note that a classical notion of {\it regular genus} for compact PL $d$-manifolds exists (\cite{Gagliardi 1981} and \cite{Gagliardi_boundary}) and makes use of the notion of (non-regular) {\it edge-colored graph with boundary} and of a suitable extension of Proposition \ref{reg_emb}. Regular genus turns out to be an important PL-manifold invariant, 
which extends to arbitrary dimension the notion of Heegaard genus (see \cite{Gagliardi 1981} for the proof of their coincidence in the closed $3$-dimensional case, \cite{Cristofori-Gagliardi-Grasselli}
and \cite{Cristofori} for the general result for compact $3$-manifolds) and which allows to obtain several classification results (see, for example: \cite{Ferri-Gagliardi Proc AMS 1982} and \cite{Ferri-Gagliardi Yokohama 1985} in arbitrary dimension,
the survey paper \cite{Casali-Cristofori-Gagliardi Complutense 2015} in dimension $4$, \cite{Casali-Gagliardi ProcAMS} and \cite{Casali Canadian} in dimension $5$).
Generalized regular genus and regular genus trivially coincide for closed PL $d$-manifolds, while their coincidence in the case of $3$-manifolds with connected boundary is proved in \cite{Cristofori-Gagliardi-Grasselli} and \cite{Cristofori}.}
\end{remark}

Another important PL invariant defined within crystallization theory takes into account the minimum order of a colored graph representing the involved
closed $d$-manifold. The following definition extends naturally this notion to singular $d$-manifolds:

\begin{definition} \label{gem-complexity}
{\em For each singular $d$-manifold $N$, its \emph{generalized gem-complexity} is the non-negative integer $\bar k(N)= p - 1$, where $2p$ is the minimum order of a $(d+1)$-colored graph representing $N$.}
\end{definition}

In the case of closed manifolds, several classification results have been obtained via gem-complexity, too: see, for example, \cite{BCrG} and \cite{Casali-Cristofori JKTR 2008} for $d=3$, \cite{Casali-Cristofori ElecJComb 2015} and \cite{Casali-Cristofori-Gagliardi Complutense 2015} for $d=4$.
\medskip

Finally, note that the above Definitions \ref{def_gen_degree} and \ref{gem-complexity} can also be referred to compact PL manifolds (with no spherical boundary components) via their associated singular manifolds.

\section{The physical meaning of the G-degree}\label{physics}

The colored tensor models theory arises as a possible approach to the study of Quantum Gravity: in some sense, its aim is to generalize to higher dimension the matrix models theory which, in dimension two, has shown to be quite useful at providing a framework for Quantum Gravity. The key generalization is the recovery of the so-called {\it 1/N expansion} in the tensor models context. In matrix models, the 1/N expansion is driven by the genera of the surfaces represented by Feynman graphs; in the higher dimensional setting of tensor models the 1/N expansion is driven by the G-degree, that equals the genus in dimension two.

We now sketch the r\^ole played by the G-degree in the 1/N expansion within colored tensor models theory: a more detailed description of these relationships between Quantum Gravity, tensor models and topology of colored graphs may be found in \cite{BGR}, \cite{Gurau-Ryan}, \cite{Gurau-book}, \cite{Casali-Cristofori-Dartois-Grasselli}.

In the framework of tensor models theory, colored graphs naturally arise as Feynman graphs encoding tensor trace invariants, built by pairwise contraction of covariant and controvariant indices in a polynomial in the tensor components.

\smallskip

If $V$ is a complex vector space of finite dimension $N$, the natural action of $GL(N)$ on $V$ extends to a natural action of $GL(N)^{\times d}$ on the tensor product $E=V^{\otimes d}$ and on its dual $E^*$.

Given a basis $\{e_i\}$ of $V$, each $T\in E$ and $\overline{T}\in E^*$ can be written as
$$ T=\sum_{i_1,\ldots,i_d=1}^N T_{i_1\ldots i_d} \ e_{i_1}\otimes \ldots \otimes e_{i_d}  \qquad \qquad \qquad \overline{T}=\sum_{i_1,\ldots,i_d=1}^N \overline{T}_{i_1\ldots i_d} \ e_{i_1}^*\otimes \ldots \otimes e_{i_d}^* ,$$
where $\{e_i^*\}$ denotes the basis of $V^*$ dual to $\{e_i\}.$

In order to obtain polynomial quantities which are invariant under the action of $GL(N)^{\times d}$ on both $E$ and $E^*$,
it is useful to build the so-called {\it trace invariants} by contracting the indices of the components of $T$ and $\overline{T},$ respecting their ordering.
In fact, any tensor invariant can be represented by a linear combinations of such invariants (see \cite{Gurau-book}).

Trace invariants can be formally written as
\begin{equation} \label{generic invariant}
B(T,\overline{T})=\sum_{\substack{i_h^{(l)}=1\\ \forall h\in\mathbb N_d, \ \forall l \in \mathbb N_{2p}}}^N \delta_{B}
\left(\prod_{l=1}^p {T}_{i_1^{(2l-1)}\ldots i_d^{(2l-1)}}\right) \left(\prod_{l=1}^p \overline{T}_{i_1^{(2l)}\ldots i_d^{(2l)}}\right),
\end{equation}
where $\delta_{B}$ denotes the product of all Kronecker deltas corresponding to the contractions of the indices.

\smallskip

Any trace invariant $B(T,\overline{T})$ of rank $d$ tensors can be encoded by a bipartite $d$-colored graph $(B,b)$ in the following way:
\begin{itemize}
\item [-] represent each $T$ appearing in formula \eqref{generic invariant} by a white vertex and each $\overline{T}$ by a black vertex;
\item [-] each time the index $i_c$ of a $T$ is contracted with the same index of $\overline{T}$, join the two corresponding vertices by a $c$-colored edge.
\end{itemize}
Note that, with respect to formula \eqref{generic invariant}, the integer $2p$ becomes the order of $B$, while each Kronecker delta in $\delta_{B}$ gives rise to a suitable colored edge of $(B,b).$

As an example let us consider the following (well-known) {\it quartic} invariant:
\begin{align}
Q_{1}(T,\overline{T})&:=\sum_{\substack{ i_1,\ldots,i_d=1 \\ j_1,\ldots,j_d=1}}^N \overline{T}_{i_1i_2\ldots i_d} T_{j_1i_2\ldots i_d}\overline{T}_{j_1j_2\ldots j_d} T_{i_1j_2\ldots j_d}\label{eq:quartm}.
\end{align}

The $d$-colored graph encoding $Q_{1}(T,\overline{T})$ is shown in Figure~\ref{fig:Qm1}. \\

\begin{figure}
 \begin{center}
  \includegraphics[scale=0.75]{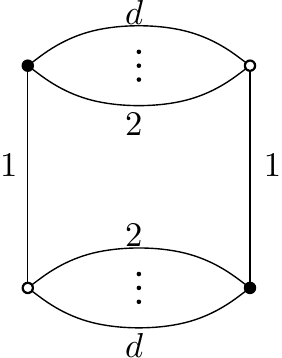}
  \caption{$d$-colored graph representing $Q_{1}(T,\overline{T})$.\label{fig:Qm1}}
 \end{center}
\end{figure}

Wick's theorem allows to obtain the following expansion of the mean value of a given trace invariant $B(T,\overline{T})$, in terms of correlations between pairs of components of $T$ and $\overline{T}$:
\begin{equation} \label{generic expansion}
\langle B(T,\overline{T})\rangle= \sum_{\substack{\sigma \in \mathcal S_p}} \left[ \sum_{\substack{i_h^{(l)}=1\\ \forall h\in\mathbb N_d, \ \forall l \in \mathbb N_{2p}}}^N \delta_{B}
\left(\prod_{r=1}^p \langle {T}_{i_1^{(2r-1)}\ldots i_d^{(2r-1)}} \, \overline{T}_{i_1^{(2\sigma(r))}\ldots i_d^{(2\sigma(r))}}\rangle \right)\right],
\end{equation}
where $\mathcal S_p$ denotes the set of all possible permutations on $\mathbb N_p.$

Each summand in the Wick expansion corresponding to a permutation $\sigma \in \mathcal S_p$ can be encoded by a (bipartite) $(d+1)$-colored graph $(\Gamma,\gamma)$ - the associated {\it Feynman graph} - starting from the $d$-colored graph $(B,b)$ representing $B(T,\overline{T})$: it suffices to add a $0$-colored edge between the white vertex associated to ${T}_{i_1^{(2r-1)}\ldots i_d^{(2r-1)}}$ and the black vertex associated to $\overline{T}_{i_1^{(2\sigma(r))}\ldots i_d^{(2\sigma(r))}}$, for each $r\in\mathbb N_p.$

Consider now a {\it $(d+1)$-dimensional colored tensor model}, i.e. a formal partition function
\begin{equation}
\mathcal{Z}[N,\{\alpha_{B}\}]:=\int_{\mbox{f}}\frac{dTd\overline{T}}{(2\pi)^{N^d}}\exp(-N^{d-1}\overline{T}\cdot T + \sum_{B}\alpha_BB(T,\overline{T})),
\end{equation}
where $T$ belongs to $(\mathbb{C}^N)^{\otimes d }$ and $\overline{T}$ to its dual, 
while $\overline{T}\cdot T=\sum_{i_1,\ldots,i_d=1}^N \overline{T}_{i_1\ldots i_d} T_{i_1\ldots i_d}.$

As shown in \cite{BGR}, its {\it free energy}  $\frac{1}{N^d}\log \mathcal{Z}[N,\{t_{B}\}]$ is the following formal series
\begin{equation} \label{1/N expansion}
\frac{1}{N^d}\log \mathcal{Z}[N,\{t_{B}\}] = \sum_{\omega_G\ge 0}N^{-\frac{2}{(d-1)!}\omega_G}F_{\omega_G}[\{t_B\}]\in \mathbb{C}[[N^{-1}, \{t_{B}\}]],
\end{equation}
\noindent where the coefficients $F_{\omega_G}[\{t_B\}]$ are generating functions\footnote{For the notions of generating series and function see \cite{[Flajolet-book]}.} of connected bipartite $(d+1)$-colored graphs with fixed G-degree $\omega_G$.

The {\it  $1/N$ expansion} of formula \eqref{1/N expansion} describes the r\^ole of colored graphs (and of their G-degree $\omega_G$) within colored tensor models theory and explains the importance of trying to understand which are the manifolds and pseudomanifolds represented by $(d+1)$-colored graphs with a given G-degree, in particular for $d=3,4.$\footnote{Indeed, in dimension 3 and 4, these graphs represent the possible states of the physical quantum space.}

\begin{remark}
{\em A parallel tensor models theory, involving {\it real} tensor variables $T\in (\mathbb{R}^N)^{\otimes d}$, has been developed, taking into account also non-bipartite colored graphs (see \cite{Witten}): this is why both bipartite and non-bipartite colored graphs will be considered in the next sections.}
\end{remark}

\section{General properties of the G-degree}\label{general properties}

An {\it $r$-dipole ($1\le r\le d$) of colors $c_1,\ldots,c_r$} of a $(d+1)$-colored graph $(\Gamma,\gamma)$ is a
subgraph of $\Gamma$ consisting of two vertices joined by $r$ edges, colored by $c_1,\ldots,c_r$, such that its vertices belong to different connected components of $\Gamma_{\hat c_1\ldots\hat c_r}$.

An $r$-dipole can be eliminated from $\Gamma$ by deleting the subgraph and welding the remaining hanging edges according to their colors; in this way another $(d+1)$-colored graph $(\Gamma^\prime,\gamma^\prime)$ is obtained. The inverse operation can be read as the addition of the dipole to $\Gamma^\prime.$

The dipole is called {\it proper} if $|K(\Gamma)|$ and $|K(\Gamma^\prime)|$ are PL-homeomorphic. This surely happens if at least one of the two connected components of $\Gamma_{\hat c_1\ldots\hat c_r}$ containing the vertices of the dipole represents a $(d-r)$-sphere (\cite[Proposition 5.3]{Gagliardi 1987}).\footnote{Note that, if $\Gamma \in G_{d+1}^{(s)}$, all $r$-dipoles are proper, for $1 < r \le d$.}

\begin{lemma}{\em (\cite{Gurau-Ryan})}\label{dipoles}
Let $(\Gamma,\gamma)$ be a $(d+1)$-colored graph and let $(\Gamma^\prime,\gamma^\prime)$ be obtained from $\Gamma$ by eliminating an $r$-dipole. Then:

$$\omega_G(\Gamma) = \frac{(d-1)!}{2}(r-1)(d-r) + \omega_G(\Gamma^\prime).$$
\end{lemma}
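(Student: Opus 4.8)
The plan is to compute each summand $\rho_\varepsilon$ of the G-degree separately and only sum at the end. Eliminating the $r$-dipole removes its two vertices, so the order drops from $2p$ to $2p-2$, i.e. $p$ decreases by one. Applying the genus formula of Proposition \ref{reg_emb} to $\Gamma$ and to $\Gamma'$ and subtracting gives, for every cyclic permutation $\varepsilon=(\varepsilon_0,\dots,\varepsilon_d)$,
\begin{equation*}
2\bigl[\rho_\varepsilon(\Gamma)-\rho_\varepsilon(\Gamma')\bigr]=\sum_{j\in\mathbb Z_{d+1}}\bigl[g_{\varepsilon_j\varepsilon_{j+1}}(\Gamma')-g_{\varepsilon_j\varepsilon_{j+1}}(\Gamma)\bigr]+(d-1),
\end{equation*}
the additive $(d-1)$ coming from the term $(1-d)p$ since $p$ falls by one. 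Summing over the $\tfrac{d!}{2}$ cyclic permutations (up to inverse) and recalling $\omega_G=\sum_\varepsilon\rho_\varepsilon$ reduces the whole statement to evaluating the total variation $\sum_{\{a,b\}}\bigl[g_{ab}(\Gamma')-g_{ab}(\Gamma)\bigr]$ of the numbers of bicolored residues.

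First I would record an elementary counting fact: each unordered pair of colors $\{a,b\}$ occurs exactly $(d-1)!$ times as a consecutive pair $\{\varepsilon_j,\varepsilon_{j+1}\}$ over all cyclic permutations. Indeed there are $\tfrac{d!}{2}(d+1)$ incidences (permutation, adjacent pair) and $\binom{d+1}{2}$ pairs, so by symmetry each pair is met $\tfrac{d!}{2}(d+1)/\binom{d+1}{2}=(d-1)!$ times. The constant $(d-1)$ then contributes $\tfrac{d!}{2}(d-1)$ to the total, and everything collapses to the single integer $\sum_{\{a,b\}}\bigl[g_{ab}(\Gamma')-g_{ab}(\Gamma)\bigr]$.

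The core of the argument — and the step I expect to be the main obstacle — is the local analysis of how one residue count $g_{ab}$ changes, according to whether $a,b$ lie among the dipole colors $c_1,\dots,c_r$ or among the remaining $d+1-r$ colors. I would treat three cases. If both $a,b\in\{c_1,\dots,c_r\}$, the two dipole vertices form a bigon in the $\{a,b\}$-subgraph that is simply deleted, so $g_{ab}$ drops by $1$. If exactly one of $a,b$ is a dipole color, the two vertices lie adjacent on a \emph{common} $\{a,b\}$-cycle (joined by the dipole edge of that color); deleting them and welding along the other color merely shortens that cycle, so $g_{ab}$ is unchanged. The delicate case is $a,b\notin\{c_1,\dots,c_r\}$: here the defining property of a dipole — that its two vertices belong to \emph{distinct} connected components of $\Gamma_{\hat c_1\dots\hat c_r}$ — forces them onto two different $\{a,b\}$-cycles, whence welding the two loose $a$-ends to each other and the two loose $b$-ends to each other fuses these two cycles into a single one, so $g_{ab}$ again drops by $1$. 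Verifying that this last case invariably produces a \emph{merge} and never a split, which is precisely where the dipole hypothesis is indispensable, is the heart of the proof.

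Finally I would assemble the count. There are $\binom{r}{2}$ pairs of the first type and $\binom{d+1-r}{2}$ of the last type, each contributing $-1$, while the $r(d+1-r)$ mixed pairs contribute $0$; hence $\sum_{\{a,b\}}\bigl[g_{ab}(\Gamma')-g_{ab}(\Gamma)\bigr]=-\binom{r}{2}-\binom{d+1-r}{2}$. Substituting this and the $\tfrac{d!}{2}(d-1)$ term back gives
\begin{equation*}
2\bigl[\omega_G(\Gamma)-\omega_G(\Gamma')\bigr]=\frac{(d-1)!}{2}\Bigl[d(d-1)-r(r-1)-(d+1-r)(d-r)\Bigr],
\end{equation*}
and a routine expansion shows the bracket equals $2(r-1)(d-r)$, so that $\omega_G(\Gamma)-\omega_G(\Gamma')=\tfrac{(d-1)!}{2}(r-1)(d-r)$, which is exactly the asserted formula.
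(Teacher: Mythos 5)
Your proof is correct, but note that the paper contains no proof of this lemma to compare against: it is quoted from \cite{Gurau-Ryan} as a known result, so your argument can only be judged on its own merits. In substance it is the standard computation (the ``jacket'' argument of the tensor-model literature) rewritten in the paper's own language: apply the Euler-characteristic formula of Proposition \ref{reg_emb} to $\Gamma$ and $\Gamma'$ permutation by permutation, use the incidence count that each unordered pair of colors occurs as a consecutive pair in exactly $(d-1)!$ of the $\frac{d!}{2}$ cyclic permutations up to inverse, and reduce everything to the total variation of the bicolored residue counts. Your three-case analysis of $g_{ab}$ is right, and the final arithmetic checks out: $d(d-1)-r(r-1)-(d+1-r)(d-r)=2(r-1)(d-r)$. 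Two details deserve an explicit word. First, in the mixed case the cycle is ``merely shortened'' only because it cannot be the bigon on the two dipole vertices, i.e.\ no edge of a non-dipole color joins them; this is again a consequence of the dipole condition (the two vertices lie in distinct components of $\Gamma_{\hat c_1\ldots\hat c_r}$), which you invoke only in the third case but which in fact also rules out degenerate attachments (such as $x_a=v$) in the second and third cases. Second, applying Proposition \ref{reg_emb} to $\Gamma'$ tacitly assumes that $\Gamma'$ is connected; this is true --- dipole elimination preserves connectedness, essentially because the merged bicolored cycles you construct reconnect the neighbours of the two deleted vertices --- but since the genus formula is stated for cellular embeddings of connected graphs, it should be said.
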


As a consequence of the above lemma, the elimination of an $r$-dipole does not increase the G-degree and, in particular, $1$- and $d$-dipole eliminations or additions do not change it.

\begin{definition} {\em A $(d+1)$-colored graph $(\Gamma,\gamma)$ is called {\it contracted} if for each color $c\in\Delta_d$ either $\Gamma_{\hat c}$ is connected or no connected component of $\Gamma_{\hat c}$ represents a $(d-1)$-sphere.}
\end{definition}

\begin{remark}   \label{rem: contracted}
{\em Note that $(\Gamma,\gamma)$ is contracted if it admits no proper $1$-dipoles.
Moreover, {\it any singular $d$-manifold $N$ 
can be represented by a contracted $(d+1)$-colored graph}. In fact, starting from any $(d+1)$-colored graph representing $N$ and eliminating a suitable number of proper $1$-dipoles, we obtain a contracted graph representing $N.$} 
\end{remark}

Hence, by Lemma \ref{dipoles}, it is easy to prove the following result.

\begin{proposition} \label{contracted}
For any singular $d$-manifold $N$, $\mathcal D_G(N)$
can be always realized by a contracted graph (with no $r$-dipole, $1 < r < d$).
\end{proposition}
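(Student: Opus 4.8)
The plan is to reduce any graph realizing the G-degree to a contracted graph with no intermediate dipoles, using Lemma~\ref{dipoles} to control the G-degree throughout. First I would take a $(d+1)$-colored graph $(\Gamma,\gamma)$ representing $N$ with $\omega_G(\Gamma)=\mathcal D_G(N)$, whose existence is guaranteed by the definition of $\mathcal D_G$ as a minimum. The goal is to modify $\Gamma$ into a graph that is simultaneously contracted and free of $r$-dipoles for $1<r<d$, while keeping it a representative of $N$ and keeping its G-degree equal to $\mathcal D_G(N)$.

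The key observation is a monotonicity fact extracted from Lemma~\ref{dipoles}: eliminating an $r$-dipole changes the G-degree by the nonnegative quantity $\tfrac{(d-1)!}{2}(r-1)(d-r)$, which vanishes exactly when $r=1$ or $r=d$. Thus a $1$-dipole or $d$-dipole elimination preserves the G-degree, while any $r$-dipole with $1<r<d$ strictly decreases it. Since $\Gamma$ already realizes the \emph{minimum} G-degree among all graphs representing $N$, I would argue that $\Gamma$ can contain no \emph{proper} $r$-dipole with $1<r<d$: eliminating such a proper dipole would produce a graph still representing $N$ (properness guarantees PL-homeomorphism of the underlying polyhedra) but with strictly smaller G-degree, contradicting minimality. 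Here I invoke the footnote remark that, since $\Gamma\in G_{d+1}^{(s)}$, every $r$-dipole with $1<r\le d$ is automatically proper; hence $\Gamma$ has no $r$-dipoles at all for $1<r<d$, which is the parenthetical clause of the statement.

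It remains to make the graph contracted without disturbing the above. By Remark~\ref{rem: contracted}, starting from $\Gamma$ and eliminating a suitable finite number of proper $1$-dipoles yields a contracted graph $\Gamma^{\ast}$ still representing $N$. Since each such elimination has $r=1$, Lemma~\ref{dipoles} shows $\omega_G$ is unchanged at every step, so $\omega_G(\Gamma^{\ast})=\omega_G(\Gamma)=\mathcal D_G(N)$. Finally I would check that this $1$-dipole contraction process does not create any new $r$-dipole with $1<r<d$; if it did, such a dipole would be proper (as $\Gamma^{\ast}\in G_{d+1}^{(s)}$) and its elimination would again lower the G-degree below the minimum, a contradiction. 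Hence $\Gamma^{\ast}$ is contracted, realizes $\mathcal D_G(N)$, and has no $r$-dipoles for $1<r<d$, as required.

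The main obstacle I anticipate is the bookkeeping in the last paragraph: one must ensure that the contraction procedure terminates and that the final graph is genuinely contracted and still a representative of $N$. The cleanest way is probably to argue directly on a \emph{minimal} representative, namely to choose among all contracted graphs representing $N$ with G-degree equal to $\mathcal D_G(N)$ one of minimal order, and then observe that minimality of order together with the strict-decrease property of $r$-dipole elimination ($1<r<d$) forces the absence of all such dipoles. This sidesteps the need to track how $1$-dipole eliminations interact with higher dipoles and lets minimality do the work uniformly.
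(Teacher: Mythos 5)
Your argument is correct and is precisely the one the paper intends (the paper leaves it as an easy consequence of Lemma~\ref{dipoles} and Remark~\ref{rem: contracted}): proper $1$-dipole eliminations preserve the G-degree and yield a contracted representative of $N$, while any $r$-dipole with $1<r<d$ is automatically proper for graphs in $G_{d+1}^{(s)}$ and its elimination would strictly lower the G-degree below the minimum $\mathcal D_G(N)$, a contradiction. Your closing refinement via an order-minimal representative is a harmless variant, but unnecessary, since G-degree minimality alone already excludes the intermediate dipoles at every stage.
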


In order to face the finiteness problem of G-degree for singular $d$-manifolds, it is useful to
recall that, for each $(d+1)$-colored graph $\Gamma$ of order $2p$, the following formula holds (\cite[Lemma 13]{Casali-Cristofori-Dartois-Grasselli}:

\begin{equation} \label{Gurau-Ryan}
 \omega_G(\Gamma) \ = \ \frac{(d-1)!}2 \Big(p + d - \sum_{i\in \Delta_d} g_{\hat i}\Big) + \sum_{i\in \Delta_d} \omega_G(\Gamma_{\hat i}),
\end{equation}
where, for each $i \in \Delta_d,$  $\omega_G(\Gamma_{\hat i})$ denotes the sum of the G-degrees of the connected components of $\Gamma_{\hat i}.$

\smallskip

For each integer $h \ge 0,$ let $\mathcal{M}^h_d$ denote the set of all singular $d$-manifolds with $h$ singular vertices. 
Obviously, for $h=0$, the set $\mathcal{M}^0_d$ coincides with the set of closed $PL$ $d$-manifolds.

\begin{theorem} \label{finiteness}
For each pair of fixed integers $\bar S\geq 0$ and $\bar R \ge d+1$, only a finite number of $(d+1)$-colored graphs $(\Gamma,\gamma)$ exists, with $\omega_G(\Gamma) = \bar S\ $ and $\ \sum_{i \in \Delta_d} g_{\hat i}= \bar R.$ 
\par \noindent
Hence, the G-degree $\mathcal D_G$ is finite-to-one within the set $\mathcal{M}^h_d$, for any $h \ge 0$.
\end{theorem}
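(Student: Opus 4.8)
The plan is to deduce both assertions from a single upper bound on the order $2p$ of the graph, obtained from the combinatorial identity \eqref{Gurau-Ryan}.

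For the first statement, I would substitute the hypotheses $\omega_G(\Gamma)=\bar S$ and $\sum_{i\in\Delta_d} g_{\hat i}=\bar R$ directly into \eqref{Gurau-Ryan}. Each summand $\omega_G(\Gamma_{\hat i})$ is a sum of regular genera of closed surfaces, hence non-negative, so discarding the term $\sum_{i} \omega_G(\Gamma_{\hat i})\ge 0$ yields the inequality $\bar S \ge \frac{(d-1)!}{2}\bigl(p+d-\bar R\bigr)$, that is $p \le \frac{2\bar S}{(d-1)!}+\bar R-d$. Since, for each fixed order $2p$, there are only finitely many $(d+1)$-colored graphs (finitely many $(d+1)$-regular properly edge-colored multigraphs on $2p$ vertices), this uniform bound on $p$ forces the family of graphs with $\omega_G=\bar S$ and $\sum_{i} g_{\hat i}=\bar R$ to be finite.

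For the second statement, fix $h\ge 0$ and a value $\bar S$ of the G-degree; I must bound the number of $N\in\mathcal M^h_d$ with $\mathcal D_G(N)=\bar S$. By Proposition \ref{contracted} each such $N$ admits a contracted representing graph $\Gamma$ with $\omega_G(\Gamma)=\bar S$, so it suffices to bound the number of contracted graphs that can arise. The crucial point is that, for a contracted graph, $\bar R=\sum_{c\in\Delta_d} g_{\hat c}$ is controlled by $h$. Writing $s_c$ for the number of $\hat c$-residues representing non-spheres --- equivalently, the number of singular vertices of $K(\Gamma)$ labelled $c$, so that $\sum_{c} s_c=h$ --- the definition of contracted graph gives, for each color $c$, either $g_{\hat c}=1$ (when $\Gamma_{\hat c}$ is connected) or $g_{\hat c}=s_c$ (when no component of $\Gamma_{\hat c}$ represents a sphere). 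In both cases $g_{\hat c}\le s_c+1$, and summing over the $d+1$ colors gives $d+1\le\bar R\le h+d+1$.

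Thus $\bar R$ ranges over the finite set $\{d+1,\dots,h+d+1\}$. Applying the first statement to each of these values (with the fixed $\bar S$) produces a finite family of graphs, and since each colored graph determines a single singular manifold $|K(\Gamma)|$, the manifolds in $\mathcal M^h_d$ with $\mathcal D_G(N)=\bar S$ all occur in this finite list; hence they are finite in number. I expect the only genuinely non-formal step to be the inequality $\bar R\le h+d+1$: everything else is bookkeeping around \eqref{Gurau-Ryan}, whereas this bound requires translating the purely combinatorial \emph{contracted} hypothesis into a constraint on $g_{\hat c}$ through the identification of each $\hat c$-residue with a vertex link of $K(\Gamma)$.
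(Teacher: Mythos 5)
Your proof is correct and takes essentially the same approach as the paper's: the bound $p \le \frac{2\bar S}{(d-1)!} + (\bar R - d)$ extracted from formula \eqref{Gurau-Ryan} together with finiteness of $(d+1)$-colored graphs of bounded order, followed by Proposition \ref{contracted} and a translation of contractedness into a bound on $\sum_{c\in\Delta_d} g_{\hat c}$ via the residue/vertex-link correspondence. The only cosmetic difference is that the paper records the exact count $\sum_{i\in\Delta_d} g_{\hat i} = d+1-m+h$ (with $m$ the number of singular colors), whereas you use the two-sided bound $d+1 \le \bar R \le h+d+1$ and apply the first statement to each admissible value of $\bar R$, which works just as well.
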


\begin{proof}
If $\sum_{i \in \Delta_d} g_{\hat i}= \bar R$, formula \eqref{Gurau-Ryan} yields:
$$p \le \frac {2 \omega_G(\Gamma)}{(d-1)!} + (\bar R-d).$$
The first statement easily follows by observing that, for each fixed  $p\ge 1$, only a finite number of $(d+1)$-colored graphs $(\Gamma,\gamma)$ exists, with \ $\# \mathcal V(\Gamma) = 2p.$

Let us now consider $(\Gamma,\gamma)\in G_{d+1}^{(s)}$ having exactly $h$ singular $d$-residues, i.e. representing a singular $d$-manifold with $h$ singular vertices.
By Proposition \ref{contracted}, we can suppose without loss of generality $\Gamma$ to be contracted.

In this case, it is easy to see that $\ \sum_{i\in \Delta_d} g_{\hat i} = d+1-m+h\ $, where $0\leq m\leq \min\{h,d+1\}$ is the number of singular colors of $\Gamma.$ 
In virtue of the first statement, only a finite number of such $(d+1)$-colored graphs exist, with fixed G-degree $\bar S$. 
Hence, only a finite number of singular $d$-manifold with $h$ singular vertices turns out to have G-degree $\mathcal D_G$ equal to $\bar S$.
\end{proof}

\begin{remark}
{\em Note that \cite[Proposition 4.2]{Gurau-book} yields a different approach to the finiteness problem for G-degree: in fact, in that statement, the number of multiple edges is taken into account, versus the (more geometrical) number of $d$-residues considered in Theorem \ref{finiteness}.}
\end{remark}

We want now to analyze the minimum value of G-degree for non-orientable singular manifolds. With this purpose, it is useful to recall the following result, obtained in \cite{Chiavacci-Pareschi}:

\begin{lemma}\label{lemma_CP}
Let  $(\Gamma,\gamma)$ be a non-bipartite  $(d+1)$-colored graph and let $\varepsilon=(\varepsilon_0,\varepsilon_1, \dots, \varepsilon_d)$ be a cyclic permutation of $\Delta_d$. If an index $i\in \Delta_d$ exists such that:
\begin{itemize}
\item[(a)] each $\{\varepsilon_{i-1},\varepsilon_{i}, \varepsilon_{i+1}\}$-residue of $\Gamma$ is bipartite;
\item[(b)] each $\widehat{\varepsilon_{i}}$-residue of $\Gamma$ is either bipartite or non-bipartite with integer
regular genus with respect to the permutation induced by $\varepsilon$ on $\widehat{\varepsilon_{i}}$;
\end{itemize}
then    $\rho_{\varepsilon}(\Gamma)$ is an integer.
\end{lemma}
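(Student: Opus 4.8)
The plan is to reduce the integrality of $\rho_\varepsilon(\Gamma)$ to the Euler characteristic of the regular embedding surface $F_\varepsilon(\Gamma)$, and then to control the parity of the relevant bigon counts using the hypotheses. Recall from Proposition \ref{reg_emb} that
\[
2 - 2\rho_\varepsilon(\Gamma) = \sum_{j \in \mathbb{Z}_{d+1}} g_{\varepsilon_j \varepsilon_{j+1}} + (1-d)p,
\]
so $\rho_\varepsilon(\Gamma)$ is an integer precisely when the right-hand side is even, i.e. when $\sum_{j} g_{\varepsilon_j\varepsilon_{j+1}} + (1-d)p$ is even (note $2 - 2\rho_\varepsilon$ is automatically even, so the point is that $\rho_\varepsilon$ is defined as half the non-orientable genus and we must exclude the half-integer case). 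The key quantity to analyze is therefore the parity of $\sum_{j \in \mathbb{Z}_{d+1}} g_{\varepsilon_j\varepsilon_{j+1}} + (1-d)p$.

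First I would isolate the role of the distinguished index $i$. The $(d+1)$ cyclic pairs $\{\varepsilon_j, \varepsilon_{j+1}\}$ split into those involving $\varepsilon_i$ — namely $\{\varepsilon_{i-1},\varepsilon_i\}$ and $\{\varepsilon_i,\varepsilon_{i+1}\}$ — and the remaining $d-1$ pairs, all of which are contained in $\widehat{\varepsilon_i}$. For the pairs not involving $\varepsilon_i$, I would relate the bigon count $g_{\varepsilon_j\varepsilon_{j+1}}$ to data internal to the $\widehat{\varepsilon_i}$-residues, and invoke hypothesis (b): each such residue is either bipartite, or non-bipartite with integer regular genus for the induced permutation. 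Applying Proposition \ref{reg_emb} inside each $\widehat{\varepsilon_i}$-residue (a $d$-colored graph) gives an Euler-characteristic relation whose integrality — guaranteed by (b) — pins down the parity of the contribution of those $d-1$ pairs together with the corresponding order and $2$-residue terms. I expect the bipartite case to be immediate (genus integer by orientability) and the non-bipartite case to be exactly where (b) is consumed.

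Next I would handle the two pairs meeting $\varepsilon_i$ using hypothesis (a). The condition that every $\{\varepsilon_{i-1},\varepsilon_i,\varepsilon_{i+1}\}$-residue is bipartite is a local $2$-coloring statement that should force a parity relation between $g_{\varepsilon_{i-1}\varepsilon_i}$, $g_{\varepsilon_i\varepsilon_{i+1}}$, and the $3$-residue count $g_{\varepsilon_{i-1}\varepsilon_i\varepsilon_{i+1}}$. Concretely, within a bipartite $3$-colored subgraph the three families of bigons and the number of vertices satisfy an Euler relation modulo $2$ coming from orientability of the associated surface; summing over all such residues yields the needed parity of $g_{\varepsilon_{i-1}\varepsilon_i} + g_{\varepsilon_i\varepsilon_{i+1}}$ in terms of quantities already shown to have controlled parity. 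Assembling the contribution of the two special pairs with that of the other $d-1$ pairs, together with the $(1-d)p$ term, I would verify that the total sum is even, hence $\rho_\varepsilon(\Gamma)$ is an integer.

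The main obstacle I anticipate is the bookkeeping of parities: making the combination of the local bipartiteness condition (a) and the residue-wise integrality condition (b) interlock so that all the pieces of $\sum_j g_{\varepsilon_j\varepsilon_{j+1}} + (1-d)p$ combine to an even number, without any leftover odd term from the boundary between "$\varepsilon_i$-pairs'' and "$\widehat{\varepsilon_i}$-pairs.'' In particular, the $d-1$ pairs lying in $\widehat{\varepsilon_i}$ must be accounted for consistently between the ambient graph and the individual residues, since a single $\{\varepsilon_j,\varepsilon_{j+1}\}$-cycle of $\Gamma$ lives inside one $\widehat{\varepsilon_i}$-residue but the residues partition the vertex set — so I would be careful to sum the residue-level Euler relations correctly and to track how the orders of the residues add up to $2p$. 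Getting this parity ledger to close is the crux; once it does, the integrality of $2-2\rho_\varepsilon(\Gamma)$ as an even integer forces $\rho_\varepsilon(\Gamma) \in \mathbb{Z}$.
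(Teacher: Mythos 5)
A preliminary remark: the paper itself gives no proof of this lemma --- it is quoted as an external result from \cite{Chiavacci-Pareschi} --- so there is no internal proof to compare against; your plan must therefore be judged on its own merits. Judged so, your reduction and overall strategy are correct: since for a non-bipartite graph $\rho_\varepsilon(\Gamma)$ is half the (non-orientable) genus of $F_\varepsilon(\Gamma)$, its integrality is exactly the evenness of $\sum_{j}g_{\varepsilon_j\varepsilon_{j+1}}+(1-d)p$, and the two hypotheses are consumed exactly where you say. What your sketch leaves unresolved --- and explicitly flags as ``the crux'' --- is the one idea that makes the ledger close: the mediating term is the bigon count $g_{\varepsilon_{i-1}\varepsilon_{i+1}}$ of the \emph{skipped} pair. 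The cyclic permutation induced by $\varepsilon$ on a $\widehat{\varepsilon_i}$-residue has as consecutive pairs the $d-1$ ambient pairs $\{\varepsilon_j,\varepsilon_{j+1}\}$, $j\neq i-1,i$, \emph{together with} $\{\varepsilon_{i-1},\varepsilon_{i+1}\}$, which is not a cyclic pair of $\varepsilon$ in $\Gamma$. Hypothesis (b) (orientability in the bipartite case) makes $2-2\rho$ an even integer for each such residue; summing the formula of Proposition \ref{reg_emb} over all $\widehat{\varepsilon_i}$-residues (their orders add to $2p$, and each relevant bigon of $\Gamma$ lies in exactly one of them) gives
\begin{equation*}
\sum_{j\neq i-1,\,i} g_{\varepsilon_j\varepsilon_{j+1}} \;+\; g_{\varepsilon_{i-1}\varepsilon_{i+1}} \;+\; (2-d)\,p \;\equiv\; 0 \pmod{2}.
\end{equation*}
Hypothesis (a) gives, for each $\{\varepsilon_{i-1},\varepsilon_i,\varepsilon_{i+1}\}$-residue with $2q$ vertices, the even Euler characteristic $g_{\varepsilon_{i-1}\varepsilon_i}+g_{\varepsilon_i\varepsilon_{i+1}}+g_{\varepsilon_{i-1}\varepsilon_{i+1}}-q$ of an orientable surface; summing over these residues yields
\begin{equation*}
g_{\varepsilon_{i-1}\varepsilon_i}+g_{\varepsilon_i\varepsilon_{i+1}}+g_{\varepsilon_{i-1}\varepsilon_{i+1}} \;\equiv\; p \pmod{2}.
\end{equation*}
Adding the two congruences, the crossover term $g_{\varepsilon_{i-1}\varepsilon_{i+1}}$ occurs twice and cancels modulo $2$, giving $\sum_{j\in\mathbb{Z}_{d+1}}g_{\varepsilon_j\varepsilon_{j+1}}\equiv (d-1)p \pmod{2}$, hence $\sum_{j}g_{\varepsilon_j\varepsilon_{j+1}}+(1-d)p\equiv 0\pmod{2}$ and $\rho_\varepsilon(\Gamma)\in\mathbb{Z}$, as required.

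Two concrete corrections to your write-up, then. First, the parity relation coming from (a) involves the skipped-pair bigon count $g_{\varepsilon_{i-1}\varepsilon_{i+1}}$, not ``the $3$-residue count $g_{\varepsilon_{i-1}\varepsilon_i\varepsilon_{i+1}}$'' as you wrote: for a bipartite $3$-colored component of order $2q$ one needs only that $\chi=g_{ab}+g_{bc}+g_{ac}-q$ is even, and the number of such components never enters. Second, your parenthetical ``note $2-2\rho_\varepsilon$ is automatically even'' is false as stated (in the non-bipartite case $\rho_\varepsilon$ may a priori be a half-integer, making $2-2\rho_\varepsilon$ odd), though your surrounding text shows you understand the correct equivalence. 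With the cancellation above made explicit, your proposal becomes a complete and correct proof, and it is in substance the parity argument of \cite{Chiavacci-Pareschi}; the only genuine gap was that the decisive interlocking of (a) and (b) through the pair $\{\varepsilon_{i-1},\varepsilon_{i+1}\}$ was anticipated rather than exhibited.
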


As a consequence, we obtain the following statement concerning the G-degree:

\begin{theorem}\label{G-deg non-bip}

Let $(\Gamma,\gamma)$ be a non-bipartite $(d+1)$-colored graph with the property that each 3-residue is bipartite and each $d$-residue is either bipartite or non-bipartite with integer regular genus with respect to any permutation; then $\omega_G(\Gamma) \ge \frac{d!}2.$

\noindent Hence, for $d\ge 4$:
\begin{itemize}
\item[(a)] $\omega_G(\Gamma) \ge \frac{d!}2$  for each non-bipartite graph $\Gamma$ belonging to $G_{d+1}^{(s)}$;
\item[(b)] if $\Gamma$ is a (bipartite or non-bipartite) graph belonging to $G_{d+1}^{(s)}$ and $\omega_G(\Gamma) < \frac{d!}2$,  then $ |K(\Gamma)| \cong_{PL} \mathbb S^d;$
\item[(c)] no non-orientable singular $d$-manifold $N$ exists, with $\mathcal D_G (N) < \frac{d!}2.$
 \end{itemize}
\end{theorem}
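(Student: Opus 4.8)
The plan is to prove the displayed inequality $\omega_G(\Gamma)\ge\frac{d!}2$ first, and then read off (a)--(c). For the inequality the engine is Lemma \ref{lemma_CP}. Fix a cyclic permutation $\varepsilon$. Since \emph{every} $3$-residue of $\Gamma$ is bipartite, condition (a) of that lemma holds for every index $i$; since every $d$-residue is either bipartite or has integer regular genus with respect to \emph{any} permutation, condition (b) holds as well (the permutation induced by $\varepsilon$ on $\widehat{\varepsilon_i}$ being a particular instance of ``any permutation''). Hence Lemma \ref{lemma_CP} applies and yields $\rho_\varepsilon(\Gamma)\in\mathbb Z$ for each $\varepsilon$. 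On the other hand $\Gamma$ is non-bipartite, so by Proposition \ref{reg_emb} every surface $F_\varepsilon(\Gamma)$ is non-orientable and $\rho_\varepsilon(\Gamma)$ equals half its genus. Thus $2\rho_\varepsilon(\Gamma)$ is the non-orientable genus of $F_\varepsilon(\Gamma)$, a positive integer; as $\rho_\varepsilon(\Gamma)\in\mathbb Z$, this genus is even and positive, hence at least $2$, so $\rho_\varepsilon(\Gamma)\ge 1$. Summing over the $\frac{d!}2$ cyclic permutations gives $\omega_G(\Gamma)=\sum_i\rho_{\varepsilon^{(i)}}(\Gamma)\ge\frac{d!}2$.

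For (a) I would verify that a non-bipartite $\Gamma\in G_{d+1}^{(s)}$ with $d\ge 4$ meets the three hypotheses above. By the residue--simplex correspondence recalled in Section \ref{colored graphs}, a $3$-residue on colors $\mathcal B$ represents the link of the $(d-3)$-simplex of $K(\Gamma)$ labeled by $\Delta_d\setminus\mathcal B$; since $d-3\ge 1$, this link is a PL $2$-sphere by definition of singular $d$-manifold, so the residue represents $\mathbb S^2$ and is therefore bipartite. Each $d$-residue is a $d$-colored graph representing a closed PL $(d-1)$-manifold with $d-1\ge 3$, whence by \cite{Chiavacci-Pareschi} its regular genus with respect to any permutation is an integer. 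The hypotheses hold, and (a) follows.

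Statement (b) then goes: if $\omega_G(\Gamma)<\frac{d!}2$, then by (a) $\Gamma$ cannot be non-bipartite, so $\Gamma$ is bipartite and each $\rho_\varepsilon(\Gamma)$ is the genus of an orientable closed surface, i.e. a non-negative integer; thus $\rho(\Gamma)\in\mathbb Z_{\ge 0}$. The inequality $\omega_G(\Gamma)\ge\frac{d!}2\,\rho(\Gamma)$ gives $\rho(\Gamma)<1$, hence $\rho(\Gamma)=0$, and it remains to deduce $|K(\Gamma)|\cong_{PL}\mathbb S^d$. For (c) I would argue by contradiction: if a non-orientable $N$ had $\mathcal D_G(N)<\frac{d!}2$, a representing graph $\Gamma$ with $\omega_G(\Gamma)<\frac{d!}2$ would exist, forcing $N=|K(\Gamma)|\cong_{PL}\mathbb S^d$ by (b), which is orientable --- a contradiction.

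The main obstacle is the final step of (b): upgrading $\rho(\Gamma)=0$ to $|K(\Gamma)|\cong_{PL}\mathbb S^d$ in the \emph{singular} setting. For closed PL manifolds the vanishing of the regular genus characterizes the sphere (classical crystallization theory), so it suffices to first rule out genuinely singular representatives. A singular vertex has a non-spherical link $L$, realized by some $d$-residue $\Gamma_{\hat c}$, and I would invoke the monotonicity $\rho(\Gamma)\ge\rho(\Gamma_{\hat c})$ together with the fact that a non-sphere closed $(d-1)$-manifold with $d-1\ge 2$ has positive regular genus; this forces $\rho(\Gamma)\ge 1$ whenever $N$ is not closed. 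Hence $\rho(\Gamma)=0$ makes all links spherical, $N$ is closed, and the classical characterization applies. Making the residue monotonicity of the regular genus precise is the delicate point of the whole argument.
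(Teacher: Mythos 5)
Your proof of the headline inequality and of parts (a) and (c) coincides with the paper's argument. For the inequality, the paper does exactly what you do: Lemma \ref{lemma_CP} applies to \emph{every} cyclic permutation $\varepsilon$ (hypothesis (a) of the lemma holds at every index because all $3$-residues are bipartite, hypothesis (b) by the assumption on $d$-residues), so $\rho_\varepsilon(\Gamma)\in\mathbb Z$ for all $\varepsilon$; non-bipartiteness makes each $F_\varepsilon(\Gamma)$ non-orientable by Proposition \ref{reg_emb}, so $\rho_\varepsilon(\Gamma)\geq\frac12$, hence $\rho_\varepsilon(\Gamma)\geq 1$ by integrality --- your parity remark just makes explicit the paper's ``$\rho_\varepsilon(\Gamma)=0$ can't hold'' step --- and summing over the $\frac{d!}2$ permutations gives the bound. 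Your verification of the hypotheses for $\Gamma\in G_{d+1}^{(s)}$, $d\geq 4$, is also the paper's: $3$-residues represent links of $(d-3)$-simplices, which are $2$-spheres since $d-3>0$, and $d$-residues represent closed $(d-1)$-manifolds, so their regular genera are integers by \cite{Chiavacci-Pareschi}. Part (c) is the same one-line contradiction.

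The genuine gap is the final step of (b), and you flag it yourself. The paper never proves that a bipartite graph with $\omega_G(\Gamma)<\frac{d!}2$ represents $\mathbb S^d$: it quotes \cite[Proposition 9]{Casali-Cristofori-Dartois-Grasselli}, which asserts this for \emph{any} bipartite $(d+1)$-colored graph, with no manifold or singular-manifold hypothesis, so no reduction to the closed case is ever needed. Your substitute route hinges on the residue monotonicity $\rho(\Gamma)\geq\rho(\Xi)$ for a singular $d$-residue $\Xi$, which you do not prove and which is not among the tools this paper provides. The only residue-level inequality available here is formula \eqref{Gurau-Ryan}, and it lives at the level of G-degree: even when the term $p+d-\sum_{i}g_{\hat i}$ is non-negative, it yields $\omega_G(\Gamma)\geq\sum_i\omega_G(\Gamma_{\hat i})$, and a non-spherical $d$-residue (whose G-degree is at least $\frac{(d-1)!}2$, by the genus-zero characterization of the sphere together with integrality) then forces only $\omega_G(\Gamma)\geq\frac{(d-1)!}2$ --- short of the needed $\frac{d!}2$ by a factor of $d$, so it cannot exclude singular vertices under your hypothesis $\omega_G(\Gamma)<\frac{d!}2$. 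Note also that the boundary-genus inequality $\mathcal G(M)\geq\mathcal G(\partial M)$ of \cite{Chiavacci-Pareschi}, used later in Section \ref{3-dim}, concerns the boundary version of regular genus via graphs with boundary, not the generalized genus of graphs in $G_{d+1}^{(s)}$, so it does not supply the missing lemma either. In short, everything in (b) beyond (a) is exactly the content of the cited Proposition 9 (restated verbatim in the paper's own proof, and again for $d=3$ in Section \ref{3-dim}); your write-up leaves that content unestablished, and the correct fix is simply to invoke it.
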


\begin{proof}
If each 3-residue of $\Gamma$ is bipartite and each $d$-residue is either bipartite or non-bipartite with integer regular genus with respect to any permutation, Lemma  \ref{lemma_CP}  ensures $\rho_{\varepsilon}(\Gamma)$ to be an integer for each cyclic permutation $\varepsilon$ of $\Delta_d$. Since $\rho_{\varepsilon}(\Gamma)=0$ can't hold because of the non-bipartiteness of $\Gamma$, $\omega_G(\Gamma) \ge \frac{d!}2$ directly follows from the definition of G-degree.

On the other hand, if $\Gamma$ belongs to $G_{d+1}^{(s)}$, each $d$-residue represents a $(d-1)$-manifold and hence, if $d\ge 4$, it is either bipartite or non-bipartite with integer
regular genus with respect to any permutation, in virtue of Lemma \ref{lemma_CP} (see also \cite[Proposition A]{Chiavacci-Pareschi});
moreover, each 3-residue of $\Gamma$ is bipartite since it represents a $2$-sphere.
Statements (a) and (c) are now trivial consequences. Statement (b) follows from statement (a), by recalling that, if $\Gamma$ is a bipartite $(d+1)$-colored graph such that $\omega_G(\Gamma) < \frac{d!}{2}$, then $|K(\Gamma)|\cong_{PL}\mathbb S^d$ (\cite[Proposition 9]{Casali-Cristofori-Dartois-Grasselli}).
\end{proof}

\begin{remark} {\em Statements (a) and (b) of the above theorem improve, for $d\ge 4$ and within the set $G_{d+1}^{(s)}$, the results of \cite[Proposition 8]{Casali-Cristofori-Dartois-Grasselli}
and \cite[Proposition 9]{Casali-Cristofori-Dartois-Grasselli} respectively.  We point out that there is no possibility of similar general improvements for $d=3$, since the minimal $4$-colored graph representing $\mathbb{RP}^2\times I$ (and also the associated singular manifold) - described in \cite{Cristofori-Mulazzani} -  has G-degree $2.$ However, as shown in Proposition \ref{G-deg3 non-bip} below, an analogue of Theorem \ref{G-deg non-bip} can be stated for particular classes of $4$-colored graphs and $3$-manifolds.}
\end{remark}

\section{The $3$-dimensional case}\label{3-dim}

As already pointed out, in dimension $3$ all 4-colored graphs represent singular $3$-manifolds. Hence, the study of the properties concerning singular $3$-manifolds 
covers all the elements of the 1/N expansion of formula \eqref{1/N expansion}, in the $3$-dimensional tensor models framework.

\smallskip

For $3$-dimensional closed manifolds, it is known that the G-degree coincides with the gem-complexity (\cite[Theorem 16]{Casali-Cristofori-Dartois-Grasselli}). 
In the case of singular 3-manifolds with only one singular vertex, 
the relation between the G-degree and generalized gem-complexity involves also the genus of the singularity, 
i.e. the genus of the boundary of the associated compact $3$-manifold. 

For sake of notational simplicity, the following proposition as well as all results of the present section will be stated in terms of $3$-manifolds with boundary, even if - in virtue of Remark \ref{correspondence-sing-boundary} - 
they may be obviously  re-stated  by involving the associated singular $3$-manifolds.

\begin{proposition}\label{G-degree vs gem-complexity (n=3)}
For each 3-manifold $M^3$ with connected non-spherical boundary,
$$  \mathcal D_G(M^3) = \bar k (M^3) +  g^{\partial}   \ \ \ \ \
\text{where} \ \ g^{\partial} =  \begin{cases}  genus (\partial M^3) \ & \ \text{if} \  \partial M^3 \ \text{is orientable} \\
                                               \frac 1 2 genus (\partial M^3) \ & \ \text{if} \  \partial M^3 \ \text{is non-orientable}
                                 \end{cases}$$
\end{proposition}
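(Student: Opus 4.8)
The plan is to compute $\omega_G$ explicitly on every \emph{contracted} graph representing $M^3$ by means of formula \eqref{Gurau-Ryan}, and then to pass to the minimum using the fact that $\mathcal D_G$ is realized on contracted graphs.

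First I would work with the singular $3$-manifold $N=\widehat{M^3}$ attached to $M^3$ through Remark \ref{correspondence-sing-boundary}, so that $\mathcal D_G(M^3)=\mathcal D_G(N)$ and the graphs representing $M^3$ are exactly those representing $N$. As $\partial M^3$ is connected and non-spherical, $N$ has a single singular vertex, whose link is the closed surface $\partial M^3$; in the notation of the proof of Theorem \ref{finiteness} this means $h=1$, and since only the color labelling that vertex can be singular, also $m=1$. Fixing a contracted $4$-colored graph $(\Gamma,\gamma)$ of order $2p$ representing $N$, the relation $\sum_{i\in\Delta_3} g_{\hat i}=d+1-m+h$ established there specializes to $\sum_{i\in\Delta_3} g_{\hat i}=4$; moreover the unique singular color, say $3$, has $g_{\hat 3}=1$, so $\Gamma_{\hat 3}$ is connected and represents $\partial M^3$, while $\Gamma_{\hat 0},\Gamma_{\hat 1},\Gamma_{\hat 2}$ are each connected and represent $\mathbb S^2$.

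Substituting into \eqref{Gurau-Ryan} with $d=3$, where $\tfrac{(d-1)!}{2}=1$, I obtain
\begin{equation*}
\omega_G(\Gamma)=\Big(p+3-\sum_{i\in\Delta_3} g_{\hat i}\Big)+\sum_{i\in\Delta_3}\omega_G(\Gamma_{\hat i})=(p-1)+\sum_{i\in\Delta_3}\omega_G(\Gamma_{\hat i}).
\end{equation*}
Each $\Gamma_{\hat i}$ is a $3$-colored graph, so its G-degree equals the genus (if bipartite) or half the genus (if non-bipartite) of the surface it represents, as recalled right after Definition \ref{Gurau-degree}. The three spherical residues contribute $0$, whereas $\Gamma_{\hat 3}$ contributes exactly $g^\partial$, since a $3$-colored graph is bipartite precisely when the represented surface is orientable. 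Hence $\omega_G(\Gamma)=(p-1)+g^\partial$ for \emph{every} contracted graph representing $N$.

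Finally, because $g^\partial$ depends only on $\partial M^3$, minimizing $\omega_G$ over contracted graphs reduces to minimizing $p$. By Proposition \ref{contracted} the value $\mathcal D_G(N)$ is attained on a contracted graph; conversely any graph of minimal order admits no proper $1$-dipole (otherwise its elimination would yield a smaller representing graph) and is therefore contracted by Remark \ref{rem: contracted}. Thus the minimal order over contracted graphs coincides with the overall minimal order $2(\bar k(M^3)+1)$ of Definition \ref{gem-complexity}, and substituting $p=\bar k(M^3)+1$ gives $\mathcal D_G(M^3)=\bar k(M^3)+g^\partial$. The step that most requires care is the residue bookkeeping for contracted graphs --- namely checking that the single non-spherical residue absorbs all the singularity while forcing the remaining three residues to be spheres --- and the legitimacy of exchanging the minimization of $\omega_G$ with that of $p$, which relies exactly on Proposition \ref{contracted} together with the contractedness of minimal graphs.
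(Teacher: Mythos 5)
Your proposal is correct and follows essentially the same route as the paper: apply formula \eqref{Gurau-Ryan} with $d=3$ to a contracted graph realizing $\mathcal D_G$, use connectedness of the boundary to get $g_{\hat i}=1$ for all $i\in\Delta_3$, and identify $\sum_i \omega_G(\Gamma_{\hat i})$ with $g^{\partial}$. Your only addition is to spell out the paper's terse final step (``$\Gamma$ must realize the generalized gem-complexity too'') by noting that vertex-minimal graphs admit no proper $1$-dipoles and are hence contracted, which correctly justifies exchanging the two minimizations.
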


\begin{proof} Let $(\Gamma,\gamma)$ be a $4$-colored graph realizing $\mathcal D_G(M^3).$ By Proposition \ref{contracted}, we can suppose $\Gamma$ to be contracted and hence,
since $\partial M^3$ is connected, $g_{\hat i}= 1$ for all $i\in \Delta_3.$

Therefore, by applying formula \eqref{Gurau-Ryan} (with $d=3$), we have

\begin{equation*}\omega_G(\Gamma) = p - 1 + \sum_{i\in \Delta_3} \omega_G(\Gamma_{\hat i}) = p - 1 + g^{\partial}\end{equation*}

\noindent where the last equality comes from recalling that the G-degree of a bipartite (resp. non-bipartite) $3$-colored graph equals the genus (resp. half the genus) of the represented surface.

It is now clear that $\Gamma$ must realize the generalized gem-complexity of $M^3$, too, hence the claim is proved.
\end{proof}

In the case of a compact 3-manifold $M^3$ with $h\ge 2 $  (non-spherical) boundary components,
the total number of $3$-residues in a contracted (vertex-)minimal graph representing $M^3$ is not a priori determined.
However, in all known situations, generalized gem-complexity is realized by graphs with the minimum possible number of $3$-residues, i.e. $\max \{4, h\}$.
This fact suggests the following

\begin{conjecture}\label{conj G-degree vs gem-complexity (n=3)}
For each compact 3-manifold $M^3$ with $h \ge 2$ non-spherical boundary components $\ ^{\partial} M_1, \dots, ^{\partial} M_h$,
$$  \mathcal D_G(M^3) =  \bar k (M^3) +  \sum_{i=1}^h g^{\partial}_i  \ \ \ \ \ \text{where} \ \ g^{\partial}_i = \begin{cases} genus (^{\partial} M_i) \ & \ \text{if} \ \ ^{\partial} M_i \ \text{is orientable} \\
                                       \frac 1 2 genus (^{\partial} M_i) \ & \ \text{if} \  \ ^{\partial} M_i \ \text{is non-orientable} \end{cases}$$
\end{conjecture}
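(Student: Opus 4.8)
The plan is to follow the scheme of the proof of Proposition~\ref{G-degree vs gem-complexity (n=3)}, isolating the new phenomenon produced by having $h\ge 2$ boundary components. First I would choose a contracted $4$-colored graph $(\Gamma,\gamma)$ realizing $\mathcal D_G(M^3)$, which is legitimate by Proposition~\ref{contracted}, and apply formula~\eqref{Gurau-Ryan} with $d=3$. Exactly as in the connected case, the sum $\sum_{i\in\Delta_3}\omega_G(\Gamma_{\hat i})$ over the $3$-residues equals $\sum_{i=1}^{h} g^{\partial}_i$: each singular $3$-residue represents one boundary component and contributes its genus (or half its genus), while the ordinary ($2$-sphere) residues contribute $0$. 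This step is essentially identical to the $h=1$ argument and presents no difficulty.

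The genuinely new ingredient is the number of $3$-residues $\sum_{i\in\Delta_3} g_{\hat i}$, which is no longer forced to equal $4$. Reusing the count already performed in the proof of Theorem~\ref{finiteness}, a contracted graph satisfies $\sum_{i\in\Delta_3} g_{\hat i}=4-m+h$, where $m$, with $1\le m\le\min\{h,4\}$, is the number of \emph{singular colors}. Substituting into formula~\eqref{Gurau-Ryan} yields, for every contracted representing graph,
\begin{equation*}
\omega_G(\Gamma)=(p-1)+(m-h)+\sum_{i=1}^{h} g^{\partial}_i .
\end{equation*}
Thus minimizing $\omega_G$ over all representing graphs amounts to minimizing the integer $p+m$, and the whole conjecture collapses to the combinatorial identity $\min_{\Gamma}(p+m)=p_{\min}+h$, where $2p_{\min}$ denotes the minimum order of a graph representing $M^3$ (so that $\bar k(M^3)=p_{\min}-1$).

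I would then split this identity into two inequalities. For the upper bound it suffices to exhibit a single representing graph with $p+m=p_{\min}+h$: this is precisely the empirical fact quoted just before the conjecture, namely that a vertex-minimal contracted gem can be chosen with the least possible number $\max\{4,h\}$ of $3$-residues, i.e. with $m=\min\{h,4\}$. I would try to produce such a graph from an arbitrary vertex-minimal gem by dipole moves (controlled through Lemma~\ref{dipoles}) and recolorings that distribute the singular vertices onto distinct colors without creating new spherical residues, so that contractedness (Remark~\ref{rem: contracted}) is preserved. For the lower bound one must show $p+m\ge p_{\min}+h$ for \emph{every} contracted representing graph; equivalently, that lowering the number of singular colors below $h$ --- forcing two boundary components to share a color label --- always costs at least one extra vertex-pair. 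I would attempt this by a local surgery: starting from a color $c$ carrying two distinct singular $\hat c$-residues, split $c$ so as to trade one unit of $m$ against at most one unit of $p$, keeping $p+m$ non-increasing and the represented manifold fixed.

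The lower bound is where I expect the real difficulty to lie, since it is exactly the assertion that boundary components cannot be cheaply amalgamated onto common color labels --- a statement with no analogue in the connected case, where $m=1=h$ is automatic. The delicate point is the constraint $m\le\min\{h,4\}$: for $h\le 4$ one can hope to realize $m=h$ together with $p=p_{\min}$, whereas for $h>4$ the four available colors cannot separate all boundary components, so $m=h$ is impossible and the balance $p+m=p_{\min}+h$ can only be met by admitting $p>p_{\min}$. Pinning down this trade-off precisely --- that each forced coincidence of singular colors is compensated by exactly one additional vertex-pair, neither more nor fewer --- is the crux on which both a complete proof and the exact range of validity of the statement depend.
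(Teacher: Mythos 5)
You should first note that the statement you were given is labelled a \emph{Conjecture}: the paper contains no proof of it, only the empirical observation that in all catalogued cases generalized gem-complexity is realized by graphs with $\max\{4,h\}$ $3$-residues. So your attempt can only be measured against that heuristic, and your opening reduction is indeed correct and matches the bookkeeping the paper performs elsewhere (in the proof of Theorem~\ref{finiteness} and of the degree-$6$ classification): for a contracted graph, $\sum_{i\in\Delta_3}g_{\hat i}=4-m+h$ and $\sum_{i\in\Delta_3}\omega_G(\Gamma_{\hat i})=\sum_{j=1}^h g^{\partial}_j$, whence formula~\eqref{Gurau-Ryan} gives $\omega_G(\Gamma)=(p-1)+(m-h)+\sum_{j=1}^h g^{\partial}_j$, and by Proposition~\ref{contracted} the conjecture is equivalent to $\min_\Gamma(p+m)=p_{\min}+h$. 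Two corrections, however. The upper bound requires no construction and no appeal to the empirical fact: since singular colors carry pairwise disjoint singular residues and these are in bijection with the $h$ boundary components, $m\le h$ holds for \emph{every} graph; a graph of minimal order admits no proper $1$-dipoles, hence is contracted by Remark~\ref{rem: contracted}, and already yields $\mathcal D_G(M^3)\le \bar k(M^3)+\sum_j g^{\partial}_j$. Conversely, the moves you invoke for the lower bound do not exist: a $4$-colored graph has exactly four colors, so ``splitting a color'' leaves the class $G_4^{(s)}$ altogether, and neither the paper nor the cited literature provides a manifold-preserving operation that reassigns singular residues to distinct colors with controlled order. That separation step is the entire content of the conjecture, and your proposal supplies no mechanism for it.

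More seriously, your own identity refutes the statement as written for $h>4$ --- a point you circle (``the exact range of validity'') without landing. Take any graph of minimal order $2p_{\min}$ representing $M^3$: it is contracted as above and trivially has $m\le 4$, so
$$\mathcal D_G(M^3)\ \le\ (p_{\min}-1)+(4-h)+\sum_{j=1}^h g^{\partial}_j\ =\ \bar k(M^3)+\sum_{j=1}^h g^{\partial}_j-(h-4),$$
which is strictly smaller than the conjectured value whenever $h\ge 5$. Hence the lower bound $p+m\ge p_{\min}+h$ you set out to prove is provably false for $h>4$, and no trade-off analysis can rescue it; the conjecture can only be correct for $2\le h\le 4$, where it amounts precisely to the assertion that every contracted representing graph satisfies $p\ge p_{\min}+(h-m)$ --- in particular that every vertex-minimal graph separates the $h$ singular residues onto $h$ distinct colors. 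You correctly isolate this as the crux, but you do not prove it, so what you have is a sound reformulation plus the trivial inequality $\mathcal D_G\le\bar k+\sum_j g^{\partial}_j$, together with an (unexploited) disproof of the case $h>4$ --- not a proof of the statement, consistent with the paper leaving it open.
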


In  \cite{Casali-Cristofori-Dartois-Grasselli} the classification is given of $4$-colored graphs representing closed $3$-manifolds up to G-degree $5$.
Proposition 9 of the same paper, for $d=3$, proves that any bipartite $4$-colored graph with G-degree less than $3$ does represent the $3$-sphere.

The following proposition extends the result to a wider class of graphs.

\begin{proposition}\label{G-deg3 non-bip}
Let $(\Gamma,\gamma)$ be a $4$-colored graph with the property that each $3$-residue is bipartite. Then, either $\omega_G(\Gamma) \ge 3$ or $\Gamma$ represents the $3$-sphere.

\noindent Therefore, no non-orientable $3$-manifold $M$ with empty or orientable boundary exists, such that $\mathcal D_G (M) < 3.$
\end{proposition}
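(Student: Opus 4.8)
The plan is to mirror the argument of Theorem \ref{G-deg non-bip}, exploiting the coincidence, special to dimension $3$, between $3$-residues and $\widehat{c}$-residues: both are subgraphs spanned by three colors, so the single hypothesis ``every $3$-residue is bipartite'' simultaneously discharges conditions (a) and (b) of Lemma \ref{lemma_CP}.

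First I would handle a non-bipartite $\Gamma$ satisfying the hypothesis. Fix an arbitrary cyclic permutation $\varepsilon=(\varepsilon_0,\varepsilon_1,\varepsilon_2,\varepsilon_3)$ of $\Delta_3$ and an index $i\in\mathbb{Z}_4$. Both $\{\varepsilon_{i-1},\varepsilon_i,\varepsilon_{i+1}\}$ and $\widehat{\varepsilon_i}$ are $3$-residues, hence bipartite, so condition (a) holds and condition (b) holds trivially (a bipartite residue satisfies it). Lemma \ref{lemma_CP} then gives $\rho_\varepsilon(\Gamma)\in\mathbb{Z}$ for every $\varepsilon$. Since $\Gamma$ is non-bipartite, Proposition \ref{reg_emb} forces every regular embedding to land in a non-orientable surface, so $\rho_\varepsilon(\Gamma)$, being half the genus of $F_\varepsilon(\Gamma)$, is strictly positive; as it is an integer, it is at least $1$. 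Summing over the three cyclic permutations yields $\omega_G(\Gamma)\ge 3$.

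Next I would dispose of the bipartite case and assemble the first assertion. If $\Gamma$ is bipartite with $\omega_G(\Gamma)<3$, then \cite[Proposition 9]{Casali-Cristofori-Dartois-Grasselli}, specialized to $d=3$, gives $|K(\Gamma)|\cong_{PL}\mathbb{S}^3$. Combining this with the previous paragraph, any $\Gamma$ whose $3$-residues are all bipartite either satisfies $\omega_G(\Gamma)\ge 3$ or, being bipartite of small degree, represents the $3$-sphere.

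For the topological corollary, let $M$ be a non-orientable $3$-manifold with empty or orientable boundary and let $\Gamma$ represent its associated singular manifold. Non-orientability of $M$ makes $\Gamma$ non-bipartite, so $\Gamma$ cannot represent $\mathbb{S}^3$. The step requiring care is checking the hypothesis: each $\widehat{c}$-residue represents the link of a $c$-labeled vertex, namely $\mathbb{S}^2$ at an ordinary vertex and a boundary component of $M$ at a singular one; as the boundary is orientable, all these link surfaces are orientable, and an orientable closed surface is never represented by a non-bipartite $3$-colored graph, so every $3$-residue is bipartite. The first assertion then forces $\omega_G(\Gamma)\ge 3$ for each representing $\Gamma$, whence $\mathcal{D}_G(M)\ge 3$. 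The only genuinely delicate points are this bipartite-versus-orientable translation for the links and the bookkeeping in Lemma \ref{lemma_CP}; everything else is immediate.
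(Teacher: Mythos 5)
Your proof is correct and takes essentially the same route as the paper's, whose entire proof is a one-line reference to the argument of Theorem \ref{G-deg non-bip} combined with Lemma \ref{lemma_CP} and \cite[Proposition 9]{Casali-Cristofori-Dartois-Grasselli}. You have simply made explicit the details the paper leaves implicit: that in dimension $3$ both the $\{\varepsilon_{i-1},\varepsilon_i,\varepsilon_{i+1}\}$-residues and the $\widehat{\varepsilon_i}$-residues are $3$-residues (so one hypothesis discharges both conditions of Lemma \ref{lemma_CP}), and that orientability of the vertex links (the $2$-sphere or orientable boundary surfaces) forces every $3$-residue to be bipartite.
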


\begin{proof}
The proof, as in the case of Theorem \ref{G-deg non-bip}, is a direct consequence of Lemma \ref{lemma_CP} and \cite[Proposition 9]{Casali-Cristofori-Dartois-Grasselli}. 
\end{proof}

In the particular case of orientable manifolds with no spherical boundary components, the classification can be extended up to G-degree $6$.
In the following propositions we will denote by $\mathbb Y^3_g$ the $3$-dimensional orientable genus $g$ handlebody, by $\mathbb D^2_2$ the disk with two holes and by $M_S$ the Seifert manifold with base $\mathbb D^2$ and Seifert parameter $(2,1),(2,1)$.

\begin{proposition}
Let $(\Gamma,\gamma)$ be a $4$-colored bipartite graph representing an orientable $3$-manifold $M$ with non-empty boundary and no spherical boundary components.
Then:
  \begin{eqnarray*}
  &\omega_G(\Gamma)& = 3\ \Longrightarrow\ M=\mathbb Y^3_1
 \\
 &\omega_G(\Gamma)& \in \{4,5\}  \ \Longrightarrow\
M \in \{\mathbb Y^3_1, \ \mathbb T^2\times I\}
\\
&\omega_G(\Gamma)& = 6
 \ \Longrightarrow\
M \in \{\mathbb Y^3_1, \ \mathbb T^2\times I,\ \mathbb D^2_2\times\mathbb S^1,\ \mathbb Y^3_1\#\mathbb Y^3_1,\ M_S, \ 
(\mathbb S^1\times\mathbb S^2)\#\mathbb Y^3_1,\ \mathbb{RP}^3\#\mathbb Y^3_1,\ \mathbb Y^3_2\}
  \end{eqnarray*}
  \end{proposition}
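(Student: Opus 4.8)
The plan is to reduce the statement to a finite, explicitly bounded family of graphs and then to identify the manifold underlying each of them. Since the elimination of a proper $1$-dipole changes neither the represented manifold nor the G-degree (Lemma \ref{dipoles} with $r=1$), I may assume without loss of generality that $\Gamma$ is contracted, with $\omega_G(\Gamma)$ equal to the prescribed value $k\in\{3,4,5,6\}$. Writing $h$ for the number of singular $3$-residues (i.e. the number of boundary components of $M$), $m$ for the number of singular colors, and $G=\sum_{i}\omega_G(\Gamma_{\hat i})$ for the total genus of the boundary residues, formula \eqref{Gurau-Ryan} for $d=3$ together with the equality $\sum_i g_{\hat i}=4-m+h$ (established as in the proof of Theorem \ref{finiteness}) gives
\begin{equation*}
\omega_G(\Gamma)=p-1+G-(h-m).
\end{equation*}
As $M$ is orientable, every singular residue is an orientable surface of genus $\ge 1$, so $G\ge h\ge m\ge 1$; hence $\omega_G(\Gamma)\ge p$, which yields the crucial bound $p\le k\le 6$, that is $\#\mathcal V(\Gamma)\le 12$.

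Next I would sharpen the constraints on the boundary by means of the surface genus formula, i.e. Proposition \ref{reg_emb} for $d=2$: each term $g_{\varepsilon_j\varepsilon_{j+1}}\ge 1$ forces a $3$-colored graph representing an orientable genus-$g$ surface to have order at least $4g+2$. Since every singular residue is a spanning subgraph of $\Gamma$, a genus-$g$ boundary component imposes $p\ge 2g+1$; combined with the displayed identity this bounds the boundary genera (in particular $g^{\partial}\le 2$ in every case) and drastically restricts the admissible pairs (order, boundary configuration) for each $k$. In the connected-boundary case I would read off $\bar k(M)=p-1$ and $g^{\partial}$ directly from $k=\bar k(M)+g^{\partial}$ via Proposition \ref{G-degree vs gem-complexity (n=3)}.

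I would then exploit the dipole structure to organize the enumeration inductively, noting that the four lists are nested. If a contracted $\Gamma$ with $\omega_G(\Gamma)=k$ admits a $2$-dipole (necessarily proper, since $\Gamma\in G_4^{(s)}$), its elimination yields a graph representing the same $M$ with G-degree $k-1$ (Lemma \ref{dipoles} with $r=2$, $d=3$), so that $M$ already occurs in the classification for degree $k-1$. It therefore suffices to settle the base case $k=3$ by hand and, for each $k\in\{4,5,6\}$, to enumerate only the contracted graphs admitting no $2$-dipole, of order at most $2k$ and G-degree exactly $k$, recording the genuinely new manifolds they produce. For $k=3$ the above bounds force $p=3$, $h=m=1$ and $g^{\partial}=1$, leaving a single orientable manifold with torus boundary and $\bar k=2$, namely the solid torus $\mathbb Y^3_1$.

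The main obstacle is this last step: the complete and certified enumeration of the $2$-dipole-free contracted bipartite $4$-colored graphs of order up to $12$, together with the topological recognition of each represented manifold. I would carry it out by generating the relevant catalogue and, for every candidate, computing distinguishing invariants (fundamental group, first homology, number and genera of boundary components) and matching the outcome against the listed manifolds. The delicate points are proving exhaustiveness of the catalogue and topologically separating the several degree-$6$ outputs sharing the same boundary — for instance $\mathbb Y^3_1\#\mathbb Y^3_1$, $M_S$, $(\mathbb S^1\times\mathbb S^2)\#\mathbb Y^3_1$ and $\mathbb{RP}^3\#\mathbb Y^3_1$ — which requires detecting the connected-sum and Seifert-fibered structure from the combinatorics of $\Gamma$.
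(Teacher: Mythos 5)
Your reduction phase is correct and is, in substance, the paper's own: you contract $\Gamma$, use formula \eqref{Gurau-Ryan} with $\sum_i g_{\hat i}=4-m+h$ to get $\omega_G(\Gamma)\ge p$ and hence order at most $12$, and pass to $2$-dipole-free graphs via Lemma \ref{dipoles} (your induction on the degree, with the degree dropping by exactly $1$ at each $2$-dipole elimination, is a slightly sharper packaging of the paper's step producing a contracted $2$-dipole-free $\Gamma'$ with $\omega_G(\Gamma')\le\omega_G(\Gamma)$ and $p'\le p$; your extra bound $p\ge 2g+1$ from Proposition \ref{reg_emb} with $d=2$ is also valid, modulo the slip that a singular residue is a subgraph, not a spanning subgraph, of $\Gamma$ --- only order monotonicity is needed). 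The problem is that all of the classificational content lies in the step you explicitly defer, and the method you propose for it would not close the proof. The paper does not generate a catalogue ad hoc: it invokes the published censuses of contracted bipartite $4$-colored graphs without $2$-dipoles up to order $12$ (\cite{Cristofori-Mulazzani}, \cite{Cristofori-Fomynikh-Mulazzani-Tarkaev}), in which the irreducible and boundary-irreducible manifolds with toric boundary are already classified, and uses the \emph{3-Manifold Recognizer} \cite{Recognizer} for the reducible toric-boundary cases.

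More seriously, even granting a certified catalogue, your recognition toolkit (fundamental group, first homology, number and genera of boundary components) is insufficient exactly where the paper itself had to work hardest: five graphs of order $10$ and G-degree $6$, each with a single singular $3$-residue of genus $2$, are not settled by the catalogues or by invariant computations. The paper disposes of them with a theoretical argument for which your proposal has no analogue: a direct computation gives $\rho(\Gamma_i)=2$, hence $\overline{\mathcal G}(M_i)\le 2$; the bound of \cite{Chiavacci-Pareschi} gives $\mathcal G(M_i)\ge \mathcal G(\partial M_i)=2$; the coincidence of generalized regular genus and regular genus for $3$-manifolds with connected boundary (\cite{Cristofori-Gagliardi-Grasselli}, \cite{Cristofori}) forces $\mathcal G(M_i)=\mathcal G(\partial M_i)=2$; and the characterization of handlebodies in \cite{Casali UMI 1990} then yields $M_i=\mathbb Y^3_2$. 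Note that $\pi_1$, $H_1$ and boundary data alone cannot certify this conclusion: a free group of rank $2$ with connected genus-$2$ boundary is compatible, a priori, with non-handlebody possibilities unless one supplies an irreducibility certificate (and, likewise, pinning the toric-boundary manifolds to $M_S$, $(\mathbb S^1\times\mathbb S^2)\#\mathbb Y^3_1$, etc.\ requires recognizing homeomorphism type, not just group invariants). You correctly flag ``detecting the connected-sum and Seifert-fibered structure from the combinatorics of $\Gamma$'' as the delicate point, but that is precisely the missing idea: no mechanism is proposed for it, whereas the paper supplies one (external certified classifications plus the regular-genus argument above).
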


 \begin{proof}
Let $h > 0$ be the number of connected (non-spherical) boundary components of $M$.
By Lemma \ref{dipoles} and Remark \ref{rem: contracted}  we can suppose that $\Gamma$ is contracted; therefore, as in the proof of Theorem \ref{finiteness}, we have $\sum_{i\in \Delta_3} g_{\hat i}\leq h+3.$

On the other hand, with the notations of Conjecture \ref{conj G-degree vs gem-complexity (n=3)}, we have
 $$\sum_{i\in \Delta_3} \omega_G(\Gamma_{\hat i}) = \sum_{j=1}^h g^{\partial}_j.$$

Then, formula \eqref{Gurau-Ryan} for $d=3$ gives
$$\omega_G(\Gamma) \ = \ p + 3 - \sum_{i\in \Delta_3} g_{\hat i} + \sum_{i\in \Delta_3} \omega_G(\Gamma_{\hat i}) \ge \ p - h + \sum_{j=1}^h g^{\partial}_j = p + \sum_{j=1}^h (g^{\partial}_j - 1) \ge p .$$

Note that Lemma \ref{dipoles} ensures that there exists a $4$-colored graph $\Gamma^\prime$ still representing $M$, such that $\Gamma^\prime$ is contracted and without $2$-dipoles, $\omega_G(\Gamma^\prime)\le\omega_G(\Gamma)$ and $p'\le p$, where $2p'$ is the order of $\Gamma^\prime$.

Now the statement is proved by taking into account the catalogues of all contracted bipartite $4$-colored graphs with order up to $12$ and without $2$-dipoles, that are presented in \cite{Cristofori-Mulazzani} and \cite{Cristofori-Fomynikh-Mulazzani-Tarkaev}.
More precisely, there are no such graphs up to order $4$ and the only graphs (of order $6, 8$ and $10$) having G-degree less or equal $5$ represent either $\mathbb T^2\times I$ or the genus one handlebody.

Moreover, irreducible and boundary-irreducible compact $3$-manifolds with toric boundary represented by graphs of G-degree $6$ and order $10$ and $12$ are among those classified in \cite{Cristofori-Fomynikh-Mulazzani-Tarkaev}, while reducible manifolds with toric boundary are easily identified by making use of the program {\it 3-Manifold Recognizer}\footnote{{\it 3-Manifold Recognizer} is a computer program for studying $3$-manifolds written by V. Tarkaev according to an algorithm elaborated by S. Matveev and other members of the Chelyabinsk topology group  (\cite{Recognizer}).}.
In this way all graphs of G-degree $6$ and order up to $12$ have been considered, except five graphs $\{\Gamma_1,\ldots,\Gamma_5\}$ of order $10$ all having only one singular $3$-residue representing the closed orientable surface of genus $2.$
However, an easy computation yields $\rho(\Gamma_i) =2$ for each $1\leq i\leq 5$ and therefore $\overline{\mathcal G}(M_i) \leq 2$ for each manifold with boundary $M_i$ represented by $\Gamma_i.$

On the other hand, by a well-known result of \cite{Chiavacci-Pareschi}, we have $\ \mathcal G(M_i)\geq\mathcal G(\partial M_i)=2$, where $\mathcal G(M_i)$ denotes the {\it regular genus} of $M_i$ (see Remark \ref{generalized vs regular genus}).
Since the generalized regular genus and the regular genus coincide for $3$-manifolds with connected boundary (see again Remark \ref{generalized vs regular genus}) and, hence, $\mathcal G(M_i) = \mathcal G(\partial M_i) = 2$, $M_i$ is proved to represent the $3$-dimensional genus $2$ handlebody by the main result of \cite{Casali UMI 1990}.
\end{proof}

Formula \eqref{Gurau-Ryan}, together with the results described in the proof of the above proposition, allows to state the following classification according to G-degree.

\begin{theorem}\label{classif 3-dim} Let $M$ be an orientable $3$-manifold with non-empty boundary and no spherical boundary components. Then:
\begin{itemize}
 \item [] $\mathcal D_G(M) > 2$\ \ and $\ \ \mathcal D_G(M) \neq 5;$
 \item [] $\mathcal D_G(M) \ = \ 3 \ \ \ \ \Longleftrightarrow  \ \ \ \ M =\mathbb Y^3_1;$
 \item [] $\mathcal D_G(M) \ = \ 4 \ \ \ \ \Longleftrightarrow  \ \ \ \ M =\mathbb T^2\times I;$
 \item [] $\mathcal D_G(M) \ = \ 6 \ \ \ \ \Longleftrightarrow  \ \ \ \ M \in \{\mathbb D^2_2\times\mathbb S^1,\ \mathbb Y^3_1\#\mathbb Y^3_1,\
M_S, \  (\mathbb S^1\times\mathbb S^2)\#\mathbb Y^3_1,\ \mathbb{RP}^3\#\mathbb Y^3_1,\ \mathbb Y^3_2\}.$
\end{itemize}
\end{theorem}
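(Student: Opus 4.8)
The plan is to promote the previous Proposition---which describes the manifold carried by a single \emph{graph} of prescribed small G-degree---to a statement about the \emph{invariant} $\mathcal D_G(M)=\min\{\omega_G(\Gamma)\mid(\Gamma,\gamma)\text{ represents }M\}$, by a careful pass to minima. I would first dispose of the universal bound $\mathcal D_G(M)>2$: since $M$ is orientable with non-empty, non-spherical boundary, every $4$-colored graph representing $M$ is bipartite and $M$ is not the $3$-sphere; hence by Proposition \ref{G-deg3 non-bip} (equivalently \cite[Proposition 9]{Casali-Cristofori-Dartois-Grasselli}) every such graph satisfies $\omega_G\ge 3$, so $\mathcal D_G(M)\ge 3$.

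Next I would treat the forward implications. If $\mathcal D_G(M)=k$, then some representing graph $\Gamma$ has $\omega_G(\Gamma)=k$, and the previous Proposition confines $M$ to the list attached to degree $k$: namely $\{\mathbb Y^3_1\}$ for $k=3$, $\{\mathbb Y^3_1,\mathbb T^2\times I\}$ for $k\in\{4,5\}$, and the eight-element list for $k=6$. I would then prune each list by minimality, since a manifold already represented at a strictly smaller degree cannot have $\mathcal D_G$ equal to $k$. As $\mathbb Y^3_1$ is represented at degree $3$ and $\mathbb T^2\times I$ at degree $4$ (established in the reverse step below), the case $k=4$ forces $M=\mathbb T^2\times I$, and the case $k=6$ removes both $\mathbb Y^3_1$ and $\mathbb T^2\times I$, leaving exactly the six claimed manifolds. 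The impossibility $\mathcal D_G(M)\neq 5$ is the same observation: any $M$ admitting a degree-$5$ graph equals $\mathbb Y^3_1$ or $\mathbb T^2\times I$, each of which has $\mathcal D_G\le 4$, so no manifold attains its minimum at $5$.

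For the reverse implications I would verify, for each listed manifold, the two inequalities $\mathcal D_G(M)\le k$ and $\mathcal D_G(M)\ge k$. The lower bounds are automatic from the previous Proposition: no graph of degree below $k$ represents $M$, because such graphs represent only the manifolds of the smaller lists, to which $M$ does not belong (using the $>2$ bound when $k=3$). The upper bounds require exhibiting, for each manifold, a representing graph of G-degree exactly $k$; these are precisely the graphs isolated in the catalogue analysis inside the proof of the previous Proposition, whose G-degrees are read off from formula \eqref{Gurau-Ryan}. Combining the two inequalities yields $\mathcal D_G(\mathbb Y^3_1)=3$, $\mathcal D_G(\mathbb T^2\times I)=4$, and $\mathcal D_G=6$ for each of the remaining six manifolds.

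The genuine difficulty is already absorbed into the previous Proposition---the exhaustive search through contracted bipartite graphs without $2$-dipoles of order up to $12$, and the identification of the five ambiguous order-$10$ graphs with $\mathbb Y^3_2$. Granting that, the only delicate point in the present deduction is \emph{sharpness}: one must confirm that every listed manifold is genuinely realized at its claimed minimal degree, not merely bounded above by it, which is exactly what the explicit catalogue graphs, evaluated through \eqref{Gurau-Ryan}, supply.
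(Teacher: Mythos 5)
Your proposal is correct and follows essentially the same route as the paper, which deduces the theorem from the preceding graph-level Proposition (together with Proposition~\ref{G-deg3 non-bip}, bipartiteness of graphs representing orientable manifolds, and the catalogue graphs realizing each degree) by exactly the pass-to-minima and pruning argument you describe; the paper merely leaves this bookkeeping implicit in a one-line proof. Your explicit attention to sharpness---that each listed manifold is actually realized by a catalogue graph of the claimed degree---is the same reliance on the catalogues of \cite{Cristofori-Mulazzani} and \cite{Cristofori-Fomynikh-Mulazzani-Tarkaev} that the paper's argument makes.
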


\medskip

\textbf{Acknowledgements:}  This work was supported by
GNSAGA of INDAM and by the University of Modena and Reggio Emilia, projects:  {\it ``Colored graphs representing pseudomanifolds: an 
interaction with random geometry and physics"} and {\it ``Applicazioni della Teoria dei Grafi nelle Scienze, nell'Industria e nella Societ\`a"}.
\medskip


\begin{thebibliography}{}

\bibitem{BCrG}
Bandieri, P., Cristofori, P., Gagliardi, C.:  Nonorientable 3-manifolds admitting coloured triangulations with at most 30 tetrahedra, J. Knot Theory Ramifications {\bf 18}, 381-395 (2009).

\bibitem{BGR}
Bonzom, V., Gurau, R., Rivasseau, V.: Random tensor models in the large N limit: Uncoloring the colored tensor models, Phys. Rev. D {\bf 85}, 084037 (2012). 

\bibitem{Casali UMI 1990}
Casali, M.R.: Una caratterizzazione dei corpi di manici $3$-dimensionali, Boll. Un. Mat. Ital. {\bf 4-B}, 517-539 (1990).

\bibitem{Casali Canadian}
Casali, M.R.: Classifying PL 5-manifolds by regular genus: the boundary  case,  Canadian J. Math. {\bf 49}, 193-211 (1997).

\bibitem{Casali-Cristofori JKTR 2008}
Casali, M.R., Cristofori, P.: A catalogue of orientable 3-manifolds triangulated by $30$ coloured tetrahedra, J. Knot Theory Ramifications {\bf 17}, 1-23 (2008).

\bibitem{Casali-Cristofori ElecJComb 2015}
Casali, M.R., Cristofori, P.: Cataloguing PL 4-manifolds by gem-complexity, Electr. J. Comb. {\bf 22} (4),  \#P4.25 (2015).


\bibitem{Casali-Cristofori-Gagliardi Complutense 2015}
Casali, M.R., Cristofori, P., Gagliardi, C.: Classifying PL 4-manifolds via crystallizations: results and open problems, in: ``A Mathematical Tribute to Professor Jos\'e Mar\'ia Montesinos Amilibia", Universidad Complutense Madrid (2016). [ISBN: 978-84-608-1684-3]

\bibitem{Casali-Cristofori-Dartois-Grasselli}
Casali, M.R., Cristofori, P., Dartois, S., Grasselli, L.: Topology in colored tensor models  via crystallization theory, arXiv: 1704.02800.

\bibitem{Casali-Gagliardi ProcAMS}
Casali, M.R., Gagliardi, C.: Classifying PL 5-manifolds up to regular genus seven,  Proc. Amer. Math. Soc. {\bf 120},  275-283 (1994).

\bibitem{Chiavacci-Pareschi}
Chiavacci, R., Pareschi, G.: Some bounds for the regular genus of closed PL manifolds, Discrete Math. {\bf 82}, 165-180 (1990).

\bibitem{Cristofori}
Cristofori, P.: Heegaard and regular genus agree for compact 3-manifolds,  Cahiers Topologie Geom. Differentielle Categ. {\bf 39}, 221-235 (1998).

\bibitem{Cristofori-Fomynikh-Mulazzani-Tarkaev}
Cristofori, P., Fomynikh, E., Mulazzani, M., Tarkaev, V.: $4$-colored graphs and knot/link complements, Results Math. {\bf 72} (1), 471-490 (2017).   

\bibitem{Cristofori-Gagliardi-Grasselli}
Cristofori, P., Gagliardi, C., Grasselli, L.: Heegaard and regular genus of 3-manifolds with boundary, Rev. Mat. Univ. Complut. Madrid {\bf 8},  379-398 (1995).

\bibitem{Cristofori-Mulazzani}
Cristofori, P., Mulazzani, M,: Compact 3-manifolds via 4-colored graphs, RACSAM {\bf 110} (2), 395-416 (2015).

\bibitem{Ferri-Gagliardi Proc AMS 1982}
Ferri, M., Gagliardi, C.: The only genus zero $n$-manifold is $\mathbb S^n$, Proc. Amer. Math. Soc. {\bf 85}, 638-642 (1982).

\bibitem{Ferri-Gagliardi Yokohama 1985} Ferri, M., Gagliardi, C.: A characterization of punctured $n$-spheres. Yokohama Math. J. {\bf 33}, 29-38 (1985).

\bibitem{[Flajolet-book]}
Flajolet, P., Sedgewick, R.: Analytic Combinatorics, Cambridge University Press, New York, (2009).

\bibitem{Gagliardi 1981} Gagliardi, C.: Extending the concept of genus to dimension $n$. Proc. Amer. Math. Soc. {\bf 81}, 473-481 (1981).

\bibitem{Gagliardi 1987} Gagliardi, C.: On a class of $3$-dimensional polyhedra. Ann. Univ. Ferrara {\bf 33}, 51-88 (1987).

\bibitem{Gagliardi_boundary}
Gagliardi, C.: Regular genus: the boundary case, Geom. Dedicata {\bf 22}, 261-281 (1987).

\bibitem{Gurau-book} Gurau, R.: Random Tensors, Oxford University Press, New York, (2016).

\bibitem{Gurau-Ryan}
Gurau, R., Ryan, J.P.: Colored Tensor Models - a review, SIGMA {\bf 8} 020 (2012).  


\bibitem{Pezzana} Pezzana, M.: Sulla struttura topologica delle variet\`a compatte, Atti Sem.
Mat. Fis. Univ. Modena {\bf 23}, 269-277 (1974).

\bibitem{Recognizer} Matveev, S., Tarkaev, V., et al.: 3-Manifolds Recognizer, April 2016. \ Available at http://www.matlas.math.csu.ru/?page=recognizer

\bibitem{Seifert-Threlfall}
Seifert, H., Threlfall, W.: A textbook of topology, Academic Press, New York, (1980).

\bibitem{Witten}
Witten, E.: An SYK-Like Model Without Disorder, arXiv:1610.09758v2.  

\end{thebibliography}
\end{document}